\documentclass[12pt]{amsart}

\usepackage[utf8]{inputenc}
\usepackage{a4wide}
\usepackage{mathtools}
\usepackage[T1]{fontenc}
\usepackage{tcolorbox}
\usepackage{enumitem}
\usepackage{amsmath,amsthm,amssymb}
\usepackage[english,french]{babel}
\usepackage{listings}
\lstdefinelanguage{Sage}[]{Python}
{morekeywords={False,sage,True},sensitive=true}
\lstset{
  frame=none,
  showtabs=False,
  showspaces=False,
  showstringspaces=False,
  commentstyle={\ttfamily\color{dgreencolor}},
  keywordstyle={\ttfamily\color{dbluecolor}\bfseries},
  stringstyle={\ttfamily\color{dgraycolor}\bfseries},
  language=Sage,
  basicstyle={\fontsize{7pt}{7pt}\ttfamily},
  aboveskip=0.3em,
  belowskip=0.1em,
}
\definecolor{dblackcolor}{rgb}{0.0,0.0,0.0}
\definecolor{dbluecolor}{rgb}{0.01,0.02,0.7}
\definecolor{dgreencolor}{rgb}{0.2,0.4,0.0}
\definecolor{dgraycolor}{rgb}{0.30,0.3,0.30}

\usepackage{mathrsfs}
\usepackage[bookmarks=true,backref,colorlinks=true,citecolor=blue,urlcolor=blue,linkcolor=magenta]{hyperref}

\usepackage{pdfcomment}			%
\usepackage[inline]{asymptote}	

\usepackage{enumitem}
\usepackage{tikz-cd}
\usepackage{amsrefs}
\usepackage{tikz}
\usepackage{tikz-cd}
\usetikzlibrary{%
  matrix,%
  calc,%
  arrows%
}

\usepackage{xcolor}
\usepackage{float}
\restylefloat{table}

\newcommand\OO{\mathcal{O}}
\newcommand\places{\Omega_K}
\DeclareMathOperator\norm{N}

\title[Abelian extensions with prescribed norms]{Constructing abelian extensions with prescribed norms}
\date{14 December 2020}

\author{Christopher~\textsc{Frei}}\address{TU Graz\\Institute of Analysis and
  Number Theory\\Steyrergasse 30/II\\8010 Graz, \textsc{Austria}}
\email{frei@math.tugraz.at}

\author{Rodolphe~\textsc{Richard}}
\address{Rodolphe \textsc{Richard}\\
13, rue du Croisic\\
22200 Plouisy,
Bretagne, \textsc{France}
}
\email{rodolphe.richard@normalesup.org}

\subjclass[2010]
{11Y40, 
11R37 (primary), 
14G05, 
11D57 
(secondary).}

\keywords{Norms form equations, Class field theory, Abelian extensions, Number fields, Inverse problem}

\newtheorem{lemma}{Lemma}

\newtheorem{theorem}[lemma]{Theorem}

\theoremstyle{definition}

\newtheorem{definition}[lemma]{Definition}


\newcommand{\Z}{\mathbf{Z}}
\newcommand{\Q}{\mathbf{Q}}
\newcommand{\F}{\mathbf{F}}

\newcommand{\N}{\mathbf{N}}

\newcommand{\Gal}{\mathrm{Gal}}
\newcommand{\Pic}{\mathrm{Pic}}

\newcommand{\knew}{k}
\newcommand{\kprimenew}{k'}
\newcommand{\Phinew}{\Phi}
\newcommand{\Phiprimenew}{\Phi'}

\DeclareMathOperator\spl{Spl}

\setcounter{tocdepth}{1}
\begin{document}
\selectlanguage{english} 

\begin{abstract}
  Given a number field $K$, a finite abelian group $G$ and finitely many elements $\alpha_1,\ldots,\alpha_t\in K$, we construct abelian extensions $L/K$ with Galois group $G$ that realise all of the elements $\alpha_1,\ldots,\alpha_t$ as norms of elements in $L$. In particular, this shows existence of such extensions for any given parameters.

  Our approach relies on class field theory and a recent formulation of Tate's characterisation of the Hasse norm principle, a local-global principle for norms. The constructions are sufficiently explicit to be implemented on a computer, and we illustrate them with concrete examples.  
\end{abstract}

\maketitle
\tableofcontents
\section{Introduction}
Attached to each extension $L/K$ of number fields comes the field-theoretic norm map $\norm_{L/K}:L^\times \to K^\times$. Given an extension $L/K$, a classical problem is to study which elements of $K$ are in the image of this norm map. We consider the inverse problem: given elements $\alpha_1,\ldots,\alpha_t\in K^\times$, we construct an extension $L/K$ that realises all of them as norms. 

Of course the trivial extension does the trick, so we prescribe moreover a given degree and even a given Galois group $G$. It is easy to prove the existence of a degree-$n$-extension $L/K$ whose normal closure has full Galois group $S_n$, such that a given $\alpha\in K^\times$ is a norm: one may adjoin to $K$ a root of a polynomial $X^n+a_{n-1}X^{n-1}+\cdots+a_1X+(-1)^n\alpha$, with coefficients $a_1,\ldots,a_{n-1}$ sufficiently generic in~$K$ so that Hilbert's irreducibility theorem applies.

On the opposite end, one can look at Abelian extensions. In this situation,  D.~Loughran, R.~Newton and the first-named author have recently proved the following result.

\begin{theorem}{\cite[Theorem 1.1]{Frei}}\label{thm:existence}
  Let $K$ be a number field, $G$ a finite abelian group and $\alpha_1,\ldots,\alpha_t\in K^\times$. Then there is a normal extension $L/K$ with $\Gal(L/K)\simeq G$ and  $\{\alpha_1,\ldots,\alpha_t\}\subset\norm_{L/K}(L^\times)$.
\end{theorem}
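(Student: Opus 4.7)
The plan is to use global class field theory to reformulate the existence of $L/K$ as the existence of a suitable continuous surjective character of the idele class group, then invoke the Tate-type characterisation of the Hasse norm principle referenced in the abstract to ensure that local norms lift to global ones. By class field theory, abelian extensions $L/K$ with $\Gal(L/K)\cong G$ correspond to continuous surjections $\phi\colon C_K \twoheadrightarrow G$, where $C_K = \A_K^\times/K^\times$. The local components $\phi_v\colon K_v^\times\to G$ satisfy the following dictionary: $\alpha\in K^\times$ is a local norm from $L_w/K_v$ if and only if $\phi_v(\alpha)=0$. The task thus splits into (i) constructing $\phi$ with $\phi_v(\alpha_i)=0$ for every $v$ and $i$, and (ii) ensuring the resulting $L/K$ satisfies the Hasse norm principle so that local norms become global norms.

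For (i), I would build $\phi$ by prescribing its local components on a finite set $S$ of places and taking $\phi_v=0$ elsewhere. Let $S_0$ consist of the archimedean places and the finite places at which some $\alpha_i$ has non-unit valuation, and enlarge $S_0$ to $S=S_0\cup\{v_1,\dots,v_s\}$ by auxiliary primes selected via Chebotarev density so that $G$ is a quotient of each $K_{v_j}^\times$ and each $\alpha_i$ is a unit at every $v_j$. At $v\in S_0$, define $\phi_v$ on $\Ocal_v^\times$ so as to annihilate the images of all $\alpha_i$ in the relevant finite local quotient, possible as this imposes finitely many linear conditions in the profinite local character group. At each $v_j$ choose $\phi_{v_j}$ so that, jointly, their images surject onto $G$ and so that the collection of decomposition groups $\{G_{v_j}\}$ is rich enough to force HNP via the Tate criterion below. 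Finally $\phi=\sum_v\phi_v$ must be adjusted so that $\phi|_{K^\times}=0$, a Grunwald--Wang-type statement achievable by exploiting the freedom at $S_0$ and by adding extra fully-split auxiliary places to circumvent the classical $2$-primary exception.

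For (ii), by Tate's criterion, HNP for $L/K$ amounts to injectivity of the localisation map $\hH^{-3}(G,\Z)\to\prod_v \hH^{-3}(G_v,\Z)$. The Schur multiplier of an abelian $G=\prod \Z/n_i$ is generated by pairs of cyclic factors, so it suffices to choose the auxiliary primes $v_j$ in step (i) to realise each such pair of factors as a subgroup of some decomposition group $G_{v_j}$, detecting each generator of $\hH^{-3}(G,\Z)$. Existence of the required primes is guaranteed by Chebotarev together with the fact that the local abelianised Galois group of $K_v$ has sufficient topological rank once the residue characteristic and local degree are taken large enough; one chooses the $v_j$ far enough apart so that their local contributions are independent and jointly cover the relevant local cohomology classes.

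The main obstacle is the orchestration: simultaneously satisfying surjectivity onto $G$, the local norm conditions at every $\alpha_i$, the Grunwald--Wang global constraint $\phi|_{K^\times}=0$, and the provision of enough non-cyclic decomposition data to force HNP. Of these, the HNP requirement is the least routine; it is precisely where a recent explicit formulation of Tate's criterion is needed to turn an abstract cohomological obstruction into a system of concrete local conditions that can be arranged at the auxiliary primes, while the other constraints follow from standard class field theory and approximation.
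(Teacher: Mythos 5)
Your high-level skeleton matches the paper's: class field theory to encode $L/K$ as a surjection $\rho\colon I_K/K^\times\to G$, local conditions $\rho_v(\alpha_i)=0$ to make the $\alpha_i$ local norms everywhere, and a Tate-type criterion (the paper uses the reformulation that $\bigoplus_v\bigwedge^2\rho_v(K_v^\times)\to\bigwedge^2 G$ be onto, equivalent to your $\widehat{H}^{-3}$ wording for abelian $G$). However, the proposal has two genuine gaps.

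First, the globalisation step. You want to prescribe the local characters $\phi_v$ at a finite set $S$, take $\phi_v$ trivial elsewhere, and then ``adjust'' so that $\phi|_{K^\times}=0$. This is where the real work lies, and the proposal as written does not solve it. Prescribing $\phi_v$ on $S$ and demanding $\phi|_{K^\times}=0$ is not a finite linear system you can quietly satisfy ``by exploiting the freedom at $S_0$'': by weak approximation $K^\times$ is dense in $\prod_{v\in S}K_v^\times$, so a collection of purely local choices at $S$ that annihilates $K^\times$ and is trivial outside $S$ is forced to be trivial; the nontrivial part of the character must be supported on inertia at some places and on Frobenii at infinitely many others, in an interlocked way. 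Invoking Grunwald--Wang and ``adding extra fully-split auxiliary places to circumvent the $2$-primary exception'' does not repair this: fully split places impose no local data and buy no freedom, and the Wang special case is not circumvented that way. The paper avoids the whole issue by passing to the $S$-id\`eles (with $\OO_S$ a PID, so $I_K/K^\times\cong I_S/\OO_S^\times$) and working with a \emph{characteristic morphism} $\rho_S\colon\widehat{\OO}_S^\times\to G$. The only global constraint then is that $\rho_S$ vanish on the image of the finitely generated group $\OO_S^\times$, and this is forced automatically by the Chebotarev condition that the generators $\gamma_0,\ldots,\gamma_r$ are $e$-th power residues at every place in the support $T$. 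This is the paper's central device, and your proposal has no analogue of it.

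Second, the HNP step is underspecified, and one of your local claims is false. You assert that the auxiliary primes $v_j$ can be chosen ``so that $G$ is a quotient of each $K_{v_j}^\times$'' by taking the residue characteristic large. But for a tamely ramified finite place the abelianised decomposition group has $\Z/e\Z$-rank at most $2$ (Frobenius plus tame inertia), so $G$ cannot be a quotient of $K_{v_j}^\times$ once $G$ needs three or more generators. What is true, and what you say later, is that it suffices to realise each pair of cyclic generators of $G$ inside some rank-$2$ decomposition group so that the $\bigwedge^2 G_{v_j}$ span $\bigwedge^2 G$; but you do not explain how to actually force a given place's decomposition group to have full rank $2$ simultaneously with all the earlier constraints. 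This is nontrivial: inertia at $v_j$ alone contributes only a cyclic piece, and the Frobenius direction is determined globally rather than chosen freely. The paper solves this with the auxiliary places $w_i$ and the inductive condition \eqref{eq:induction}: each $w_i$ is chosen, via Lemmas \ref{lem:cheb} and \ref{lem:irreducibility_enough}, so that a multiplicatively tweaked uniformiser at $v_i$ becomes a generator of a fresh $\F_{w_i}^\times\otimes\Z/e\Z$ factor, which is exactly what enlarges each decomposition group to rank $2$ in independent directions and yields \eqref{eq:basis_condition}. Your ``choose the $v_j$ far enough apart so their local contributions are independent'' gestures at this but does not supply a workable mechanism.
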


In \cite{Frei}, this follows from an analytic result regarding the density of such extensions. The appendix to \cite{Frei} by Y.~Harpaz and O.~Wittenberg provides in addition an algebro-geometric proof of Theorem \ref{thm:existence}, using descent, a version of the fibration method developed in their work \cite{HW18}, and a version of Hilbert's irreducibility theorem.
\smallskip

Both of these proofs are not constructive, and the main achievement of our work is the direct construction of an extension $L/K$ that satisfies the conclusion of Theorem \ref{thm:existence}. Our construction is based on explicit class field theory, the availability of places of $K$ that satisfy certain Chebotarev-type conditions, and a version of Tate's criterion for the Hasse norm principle (see Theorem \ref{thm:HNP} in \S \ref{sec:HNP}), which was formulated in \cite{HasseNormPrinciple,Frei} and is of central importance for the quantitative arguments in these papers. Some of our constructions are reminiscent of arguments appearing in \cite{Jehne}.
\smallskip

A stronger version of Theorem \ref{thm:existence} is proved in \cite[Corollary 4.11]{Frei}. There, one can even specify finitely many places at which the extension $L/K$ is required to have a prescribed admissible local structure. Our constructions are sufficiently general to permit the same restrictions, thus also recovering \cite[Corollary 4.11]{Frei}. See \S \ref{sec:local} for precise statements.
\smallskip

Our constructions are explicit enough to be implemented on a computer. In \S \ref{sec:computation}, we provide some remarks on how this can be done, though one should note that \emph{efficiency} is not a central focus of this work. Nevertheless, we illustrate our constructions in \S \ref{sec:illustrations} with concrete examples obtained by hand and aided by computer.

\subsection*{Outline of the construction}
By class field theory, we can specify the extension $L/K$ through a
continuous epimorphism~$\rho$ from the id\`ele class group~$I_K/K^\times$ of $K$ to $G$.
We construct the epimorphism~$\rho$ in such a way that $\alpha_1,\ldots,\alpha_t$ are local norms at all places, and moreover $L/K$ satisfies the \emph{Hasse norm principle}, a local global principle for norms reviewed in \S \ref{sec:HNP}.
We rely
on
a criterion for the Hasse norm principle to hold,
in terms of the decomposition groups; see Theorem \ref{thm:HNP}.
This is especially amenable to class field theory and allows us to reformulate the desired conditions, precisely~\eqref{eq:cond_local_norms}--\eqref{eq:cond_psi}, in terms of conditions on the local components~$\rho_v$ of the epimorphism~$\rho$, namely~\eqref{eq:rho_local_norms}--\eqref{eq:rho_triv_S}.

The epimorphism~$\rho$
will be
induced by a simpler object, which we call a \emph{characteristic morphism}, introduced in \S \ref{sec:char_morph}. The former is defined on classes of id\`eles, and the latter on actual integral $S$-id\`eles, for a conveniently chosen set of places~$S$.
Lemmas~\ref{lemma} and~\ref{eq:rho_local_norms} explain explicitly how the characteristic morphism inducing $\rho$ determines the corresponding decomposition subgroups $G_v=\rho_{v}({K_{v}}^\times)\leq G$ of the extension~$L/K$.

We actually study in~\S\ref{sec:local_norms} a very particular class of characteristic morphisms, of the form~$\rho^T_S$ of~\eqref{characteristic morphisms type T}, which are are explicitly determined by the data of a finite set~$T$ of places~$v_i$ of~$K$. To guarantee that $\alpha_1,\ldots,\alpha_t$ are local norms at all places, we give elementary sufficient conditions~\eqref{eq:local_norm_elementary}, to be checked only at these places~$v_i$. This amounts to
Chebotarev-type conditions on the places~$v_i$, ensuring an unlimited supply of places to choose from.

In order to also satisfy the Hasse norm principle, we use
the
mentioned criterion,
Theorem \ref{thm:HNP},
which
requires us
to produce sufficiently many large decomposition groups, and a ``spanning type'' property of these. At this point it will help to work with auxiliary characteristic morphisms~$\rho'_S$ into an auxiliary group~$G'\simeq(\Lambda^2 G)^2$, related to~$G$ by an explicit map~$\Psi:G'\to G$ given in~\eqref{eq:def_Psi}. In~\S\ref{sec:forcing_HNP} we devise a concrete condition~\eqref{eq:basis_condition} on this auxiliary characteristic morphism~$\rho'_S$
to ensure the criterion of Theorem \ref{thm:HNP}
for the characteristic morphism~$\Psi\circ\rho'_S$. It involves places~$v_i$ and
auxiliary
places~$w_i$. The
relevant
decomposition groups
are those
at
the places~$v_i$, and the auxiliary place~$w_i$ is chosen to ensure
that
the decomposition group~${G'}_{v_i}\leq G'$ will be large enough.
The condition~\eqref{eq:basis_condition} has then to do with relative position of these decomposition groups~$G'_{v_i}$, guaranteeing the ``spanning type'' property
for
~$\Psi\circ\rho'_S$. 

We finally explain in \S\ref{sec:choosing_places} how one can find a set of places satisfying~\eqref{eq:basis_condition}
through an incremental construction. Here, the auxiliary places $w_i$ have to satisfy Chebotarev-type conditions of a form studied earlier in \S\ref{sec:supply}. 
If~$G$ can be generated by~$\knew$ elements, then we can stop as soon as we
have produced
~$\kprimenew={\knew \choose 2}$ places~$v_i$, and as many of the corresponding~$w_i$.

Folding up, the set~$T=\{v_1;w_1;...;v_{\kprimenew};w_{\kprimenew}\}$ determines an auxiliary characteristic morphism~\eqref{characteristic morphisms type T} into~$G'$ which satisfies condition~\eqref{eq:basis_condition}, see Theorem~\ref{thm:choose_places}. Composing with~$\Psi$
yields
a characteristic morphism into~$G$ and an induced epimorphism~$\rho:I_K/K^\times\to G$,
which
corresponds to an
extension~$L/K$. By Theorem~\ref{thm:main_construction} this~$\rho$ will satisfy~\eqref{eq:rho_local_norms}--\eqref{eq:rho_triv_S}, and hence
we are done.

\smallskip
Our constructions rely on reduction maps modulo well-chosen places of $K$. We require these places to satisfy certain Chebotarev conditions, and their abundance is secured by the Chebotarev density theorem. In practice, we find suitable places through exhaustive search.  

\subsection*{Acknowledgements}
This collaboration was initiated when C.F.~presented the results of \cite{Frei} at the ninth WOMBL one-day meeting at University of Cambridge in 2019, and continued during a visit of R.R.~to give a seminar talk at The University of Manchester. We thank the organisers and hosting institutions for their hospitality. C.F.~was supported by EPSRC grant EP/T01170X/1. R.R.~was supported by ERC grant GeTeMo 617129, and Leverhulme Research Project Grant ``Diophantine problems related to Shimura varieties''.

\section{Constructive proof of Theorem \ref{thm:existence}}
\subsection{Notation}
We fix the number field $K$, the elements $\alpha_1,\ldots,\alpha_t\in
K^\times$ and the finite abelian group $G$ henceforth. We write $\places$ for
the set of all places of $K$. By $S$, we will always denote a finite subset of $\places$ that contains all archimedean places. For a non-archimedean place $v$, we write $K_v$ for the completion of $K$ at $v$, $\OO_v$ for the subring of $v$-adic integers, and $\OO_v^\times$ for the multiplicative group of $v$-adic units. The ring of $S$-integers of $K$ is defined as $\OO_S=K\cap\bigcap_{v\notin S}\OO_v$, and its unit group, the group of $S$-units, is $\OO_S^\times=K\cap\bigcap_{v\notin S}\OO_v^\times$. Here and in similar situations, the index $v\notin S$ is understood to run over $\places\smallsetminus S$. The symbol $v$ will also be used for various objects defining the place $v$, including the corresponding prime ideals in $\OO_S$ and $\OO_v$, and the exponential valuation at $v$. We write, for example $\alpha\bmod v$ for the image of $\alpha\in \OO_v$ in the residue field $\F_v$ at $v$. For any field $F$ and $e\in\N$, we will write  $F^{\times e}=(F^\times)^e$ for the group of non-zero $e$-th powers in $F$.

We will always assume that $S$ is large enough, in particular we will require that
\begin{equation}
  \label{eq:S}
  \begin{aligned}
    &S \text{ contains all archimedean places, }\\
    &\OO_S\text{ is a principal ideal domain, and}\\
    &\{\alpha_1,\ldots,\alpha_t\}\subset\OO_S^\times.
  \end{aligned}
\end{equation}

\subsection{Class field theory}
By global class field theory, extensions $L/K$ with an isomorphism $\Gal(L/K)\to G$ are parameterised
by epimorphisms $\rho : I_K/K^\times\to G$. Here, $I_K$ is the id\`ele group of $K$, in which $K^\times$ is embedded diagonally, and a morphism between topological groups is a continuous group homomorphism. We endow the finite group $G$ with the discrete topology. The extension given by such an epimorphism $\rho$ will be denoted by $L_\rho/K$. By the interplay between local and global class field theory, the local behaviour of the extension $L_\rho$ corresponding to $\rho$ at $v$ is described by the restriction $\rho_v$ of $\rho$ to the subgroup $K_v^\times\subset I_K/K^\times$, embedded by sending $\alpha\in K_v^\times$ to the class of the id\`ele $(1,\ldots,1,\alpha,1,\ldots)$ that has component $\alpha$ at place $v$ and $1$ at every other place.

\subsection{Hasse norm principle}\label{sec:HNP}
Our goal is to construct an epimorphism $\rho$ whose corresponding extension $L=L_\rho$ satisfies $\{\alpha_1,\ldots,\alpha_t\}\subset\norm_{L/K}(L^\times)$. Since it is hard to detect global norms directly, we rely on local norms and a suitable local-global principle. Recall that the norm $\norm_{L/K}:L^\times\to K^\times$ extends to a homomorphism $\norm_{L/K}:I_{L}\to I_K$. By definition, the \emph{Hasse norm principle} holds for the extension $L/K$, if
\begin{equation*}
\norm_{L/K}(L^\times)= K^\times\cap\norm_{L/K}(I_L).
\end{equation*}
We call the elements of the  right-hand set \emph{local norms at all places} of $K$, so the Hasse norm principle, if valid for $L/K$, allows us to detect global norms as local norms at all places. The Hasse norm theorem \cite{Hasse} asserts the validity of the Hasse norm principle when $\Gal(L/K)$ is \emph{cyclic}. However, it may fail already in the case of biquadratic extensions, with the first counter-example of $\Q(\sqrt{-3},\sqrt{13})/\Q$ also due to Hasse. 

We rely on the following criterion for the Hasse norm principle, which is based
on Tate's cohomological description of the knot group $(K^\times\cap\norm_{L/K}(I_L))/\norm_{L/K}(L^\times)$ (e.g.~\cite[\S 11.4]{Tate}).

\begin{theorem}[{\cite[Lemma 4.2]{Frei}}, {\cite[\S6]{HasseNormPrinciple}}]\label{thm:HNP} Let $G$ be a finite abelian group and $\rho:I_K/K^\times\to G$ a surjective homomorphism. Then the Hasse norm principle holds for the extension $L_\rho/K$ if and only if the natural map
\begin{equation}\label{LG criterion}
\bigoplus_{v\in\places}{\bigwedge}^2 \rho_v(K_v^\times) \to {\bigwedge}^2 G
\end{equation}
is surjective.
\end{theorem}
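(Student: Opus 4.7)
The plan is to combine Tate's cohomological description of the knot group with the classical identification of the Schur multiplier of a finite abelian group as its exterior square.

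First, I would invoke Tate's theorem, as developed in \cite[\S 11.4]{Tate}: for the finite abelian extension $L_\rho/K$ with Galois group $G$ and decomposition subgroups $G_v\leq G$, the knot group fits into a natural exact sequence
\begin{equation*}
\bigoplus_{v\in\places} \hat{H}^{-3}(G_v, \Z) \longrightarrow \hat{H}^{-3}(G, \Z) \longrightarrow \frac{K^\times\cap\norm_{L_\rho/K}(I_{L_\rho})}{\norm_{L_\rho/K}(L_\rho^\times)} \longrightarrow 0,
\end{equation*}
in which the left-hand map is the sum of Tate corestrictions. Consequently, the Hasse norm principle holds for $L_\rho/K$ if and only if this sum of corestrictions is surjective.

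Second, I would use the Hopf-formula identification: for every finite abelian group $A$ there is a canonical isomorphism $\hat{H}^{-3}(A,\Z)=H_2(A,\Z)\cong {\bigwedge}^2 A$, and this identification is natural with respect to subgroup inclusions $\iota:H\hookrightarrow A$ in the sense that corestriction $\hat{H}^{-3}(H,\Z)\to\hat{H}^{-3}(A,\Z)$ corresponds to the functorial map ${\bigwedge}^2\iota:{\bigwedge}^2 H\to{\bigwedge}^2 A$. Applying this to each inclusion $G_v\hookrightarrow G$ rewrites the corestriction map above as
\begin{equation*}
\bigoplus_{v\in\places} {\bigwedge}^2 G_v \longrightarrow {\bigwedge}^2 G.
\end{equation*}

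Third, by local class field theory, $\rho_v$ factors as the surjective local reciprocity map $K_v^\times\twoheadrightarrow G_v$ followed by the inclusion $G_v\hookrightarrow G$; in particular $\rho_v(K_v^\times)=G_v$. Substituting this identification turns the above map into precisely~\eqref{LG criterion}, and the theorem follows.

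The main obstacle lies in the naturality step: one has to verify that the Schur-multiplier isomorphism $H_2(A,\Z)\cong{\bigwedge}^2 A$ intertwines Tate-cohomological corestriction along $H\hookrightarrow A$ with the exterior square of inclusion ${\bigwedge}^2 H\to {\bigwedge}^2 A$. This is a short, standard homological computation (and is implicitly carried out in the references \cite{HasseNormPrinciple,Frei} cited in the statement); together with the explicit form of Tate's exact sequence it is the only non-formal input of the proof, all the rest being bookkeeping via local class field theory.
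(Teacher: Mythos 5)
The paper does not prove Theorem~\ref{thm:HNP}: it is stated as a citation to \cite[Lemma~4.2]{Frei} and \cite[\S 6]{HasseNormPrinciple}, so there is no internal proof here to compare against. Your outline is correct and is essentially the derivation carried out in those references: Tate's theorem (in the form that the knot group is the cokernel of the sum of corestrictions $\bigoplus_v \hat H^{-3}(G_v,\Z)\to\hat H^{-3}(G,\Z)$, obtained by dualising the kernel of restriction on $H^3(\cdot,\Z)$ in \cite[\S 11.4]{Tate}), the identification $\hat H^{-3}(A,\Z)=H_2(A,\Z)\cong\bigwedge^2 A$ for abelian $A$, its naturality under inclusions (corestriction in degree $-3$ is the homology pushforward, and the Hopf/Pontryagin isomorphism is functorial in $A$), and the local class field theory identification $\rho_v(K_v^\times)=G_v$. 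All of these steps are sound, and your flag of the naturality verification as the one non-formal input is exactly the right place to focus care; the rest is bookkeeping as you say.
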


\subsection{Local conditions}\label{sec:local} For additional flexibility, we allow ourselves to impose arbitrary local conditions at the finite set of places $S$, as long as these conditions are realised by some sub-$G$-extension that admits $\alpha_1,\ldots,\alpha_t$ as local norms at all places. More precisely, assume we are given a morphism $\psi : I_K/K^\times \to G$, not necessarily surjective, such that $\{\alpha_1,\ldots,\alpha_t\}\subset \norm_{L_\psi/K}(I_{L_\psi})$. We will construct an epimorphism $\varphi : I_K/K^\times \to G$ with the following properties:
\begin{align}
&\text{$\{\alpha_1,\ldots,\alpha_t\}\subset\norm_{L_\varphi/K}(I_{L_\varphi})$,}\label{eq:cond_local_norms}\\
&\text{the extension $L_\varphi/K$ satisfies the Hasse norm principle, and}\label{eq:cond_HNP}\\
& \text{the local restriction $\varphi_v$ agrees with $\psi_v$ on $K_v^\times$ for all $v\in S$.}\label{eq:cond_psi}
\end{align}
In particular, by \eqref{eq:cond_local_norms} and \eqref{eq:cond_HNP} the $\alpha_i$ are global norms from $L_\varphi$, and thus we obtain a  constructive proof of Theorem \ref{thm:existence} and \cite[Corollary 4.11]{Frei}.

Possibly enlarging $S$, we will assume that it contains all places at which $\psi$ is ramified. Let $\spl_S(\psi)$ be the set of all places $v\notin S$ that are split completely in $L_\psi$, i.e.~$\psi_v(K_v^\times)=\{e_G\}$.

In the rest of this section, we will explain how to construct an epimorphism $\rho:I_K/K^\times\to G$ that satisfies the following:
\begin{align}
  &\text{for all $v\in\places$, }\{\alpha_1,\ldots,\alpha_t\}\subset\ker(\rho_v:K_v^\times\to I_K/K^\times\to G),\label{eq:rho_local_norms}\\
  &\text{the natural map }\bigoplus_{v\in\spl_S(\psi)}{\bigwedge}^2 \rho_v(K_v^\times) \to {\bigwedge}^2 G \text{ is surjective,}\label{eq:rho_HNP}\\
  &G=\sum_{v\in\spl_S(\psi)}\rho_v(K_v^\times)\text{, and}\label{eq:rho_surj}\\
  &\text{for all $v\in S$, the local morphism $\rho_v:K_v^\times\to G$ is trivial.}\label{eq:rho_triv_S} 
\end{align}

Once we have constructed such an epimorphism $\rho$, we can take $\varphi:I_K/K^\times \to G$ to be the product of $\psi$ and $\rho$. The following lemma justifies this.

\begin{lemma}\label{lem:twist}
If $\rho:I_K/K^\times\to G$ is an epimorphism with the properties \eqref{eq:rho_local_norms}--\eqref{eq:rho_triv_S}, then the product $\varphi = \psi\rho : I_K/K^\times\to G$ is an epimorphism with the properties \eqref{eq:cond_local_norms}--\eqref{eq:cond_psi}.
\end{lemma}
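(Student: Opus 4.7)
The plan is to verify each of the four required properties of $\varphi=\psi\rho$ using the hypotheses on $\rho$ together with the two key facts that (a) at places $v\in S$ the morphism $\rho_v$ is trivial by~\eqref{eq:rho_triv_S}, and (b) at places $v\in\spl_S(\psi)$ the morphism $\psi_v$ is trivial by the very definition of $\spl_S(\psi)$. Throughout, I will freely use the local-global dictionary: by global class field theory, for any morphism $\eta:I_K/K^\times\to G$, an element $\alpha\in K^\times$ lies in $\norm_{L_\eta/K}(I_{L_\eta})$ if and only if $\eta_v(\alpha)=0$ for every place $v$, and $\eta_v(K_v^\times)$ is precisely the decomposition subgroup of $L_\eta/K$ at~$v$.

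First I would check \eqref{eq:cond_psi}: for $v\in S$, since $\rho_v$ is trivial by~\eqref{eq:rho_triv_S}, the product $\varphi_v=\psi_v\rho_v$ equals $\psi_v$ on $K_v^\times$. Next I would verify surjectivity of $\varphi$ and condition~\eqref{eq:rho_surj}'s consequence simultaneously: at every place $v\in\spl_S(\psi)$, we have $\psi_v=0$, hence $\varphi_v=\rho_v$, so $\varphi_v(K_v^\times)=\rho_v(K_v^\times)$. Summing over $v\in\spl_S(\psi)$ and invoking~\eqref{eq:rho_surj} gives
\begin{equation*}
    \sum_{v\in\spl_S(\psi)}\varphi_v(K_v^\times)=\sum_{v\in\spl_S(\psi)}\rho_v(K_v^\times)=G,
\end{equation*}
so the image of $\varphi$ contains a generating set, proving that $\varphi$ is surjective.

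For~\eqref{eq:cond_local_norms}, I would check that $\varphi_v(\alpha_i)=0$ for every place $v$ and every $i$. Writing $\varphi_v(\alpha_i)=\psi_v(\alpha_i)+\rho_v(\alpha_i)$ (with $G$ in additive notation), the first summand vanishes because by hypothesis $\alpha_i\in\norm_{L_\psi/K}(I_{L_\psi})$, which by class field theory means $\psi_v(\alpha_i)=0$; the second summand vanishes by~\eqref{eq:rho_local_norms}. Finally, for~\eqref{eq:cond_HNP}, I would apply Theorem~\ref{thm:HNP} to $\varphi$: it suffices to show that $\bigoplus_v\bigwedge^2\varphi_v(K_v^\times)\to\bigwedge^2 G$ is surjective. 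But restricting the direct sum to those $v\in\spl_S(\psi)$, where $\varphi_v(K_v^\times)=\rho_v(K_v^\times)$, the induced map is exactly the one in~\eqref{eq:rho_HNP}, which is surjective by hypothesis. Thus the full map is a fortiori surjective.

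None of the steps is delicate; the only conceptual point is recognising that conditions~\eqref{eq:rho_HNP}--\eqref{eq:rho_surj} are stated using the auxiliary set $\spl_S(\psi)$ precisely so that the twist by $\psi$ leaves the local images at those places untouched, which is exactly what makes the argument go through.
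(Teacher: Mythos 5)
Your proof is correct and follows essentially the same route as the paper's. The only small divergence is in verifying~\eqref{eq:cond_local_norms}: you use the hypothesis $\alpha_i\in\norm_{L_\psi/K}(I_{L_\psi})$ directly to conclude $\psi_v(\alpha_i)=e_G$ at \emph{every} place $v$, whereas the paper treats $v\in S$ by that hypothesis but handles $v\notin S$ separately, observing that $\psi$ is unramified there and $\alpha_i\in\OO_S^\times$ by~\eqref{eq:S}, so $\psi_v$ vanishes on $\OO_v^\times\ni\alpha_i$. Both arguments are valid; yours is the more economical reading of the hypothesis, while the paper's makes explicit that for $v\notin S$ the vanishing of $\psi_v(\alpha_i)$ is automatic from the setup and does not depend on the norm hypothesis.
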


\begin{proof}
For the local morphisms at all places $v$, we have $\varphi_v = \psi_v\rho_v : K_v^\times\to G$. For $v\in\spl_S(\psi)$, the morphism $\psi_v$ is trivial, and hence $\varphi_v=\rho_v$. This shows that
\begin{equation*}
 G=\sum_{v\in\spl_S(\psi)}\varphi_v(K_v^\times)\quad\text{ and that }\quad \bigoplus_{v\in\spl_S(\psi)}{\bigwedge}^2 \varphi_v(K_v^\times) \to {\bigwedge}^2 G\text{ is surjective.}
\end{equation*}
In particular, $\varphi$ is surjective and  \eqref{eq:cond_HNP} holds by Theorem \ref{thm:HNP}. For $v\in S$, the local morphism $\rho_v$ is trivial and thus $\varphi_v = \psi_v$, as desired in \eqref{eq:cond_psi}. For each $\alpha_i$ and $v\in S$, we see by local class field theory that $\alpha_i\in\ker(\psi_v)=\ker(\varphi_v)$. Let $v\notin S$, then $\psi$ is unramified at $v$ and thus $\psi_v$ vanishes on $\OO_{v}^\times$, which contains all of the $\alpha_i$ by \eqref{eq:S}. Hence, $\varphi_v(\alpha_i)=\rho_v(\alpha_i)=e_G$. We have shown that all the local morphisms $\varphi_v$ vanish on all $\alpha_i$, hence \eqref{eq:cond_local_norms} holds by local class field theory.
\end{proof}

\subsection{Characteristic morphisms}\label{sec:char_morph}  Here we describe a hands-on way of constructing an epimorphism $\rho:I_K/K^\times\to G$ through the specification of a simpler object which we call a \emph{characteristic morphism}.

Consider the $S$-id\`eles $I_S = K_S^\times\times\widehat{\OO}_S^\times\subset I_K$, where
\begin{equation*}
K_S^\times = \prod_{v\in S}K_v^\times\quad \text{ and }\quad \widehat{\OO}_S^\times=\prod_{v\notin S}\OO_{v}^\times.
\end{equation*}
Since the $S$-class group $\Pic(\OO_S)$ is trivial by \eqref{eq:S}, we have $I_SK^\times = I_K$. Inclusion of the open subgroup $I_S$ into $I_K$ thus induces an open epimorphism $I_S \to I_K/K^\times$ with kernel $I_S\cap K^\times = \OO_S^\times$, and hence an isomorphism
\begin{equation}\label{eq:isom}
I_S/\OO_S^\times \cong I_K/K^\times.
\end{equation}
With $p: I_S \to \widehat{\OO}_S^\times$ the natural projection, we define the following.  
\begin{definition}
  A \emph{characteristic morphism} for $G$ is a continuous epimorphism $\rho_S : \widehat{\OO}_S^\times\to G$ that vanishes on $p(\OO_S^\times)$.
\end{definition}
Since $\rho_S$ is by definition continuous, it vanishes on the closure of $p(\OO_S^\times)$. Note that, by the Dirichlet $S$-unit theorem (e.g. \cite[Chapter VI, Prop.~1.1]{Neukirch}), $\OO_S^\times$ is finitely generated of $\Q$-rank $r=|S|-1$, so it has generators $\gamma_0,\ldots,\gamma_r$, with $\gamma_0$ a root of unity. For $\rho_S$ to vanish on $p(\OO_S^\times)$, it is thus enough that it vanishes on the images of $\gamma_0,\ldots,\gamma_r$ under the diagonal embedding $\OO_S^\times\to\widehat{\OO}_S^\times$.

A characteristic morphism $\rho_S:\widehat{\OO}_S^\times \to G$ induces an epimorphism $\rho: I_K/K^\times\to G$ as follows: as the epimorphism $\rho_S\circ p : I_S \to G$ vanishes on $\OO_S^\times$, it induces an epimorphism $I_S/\OO_S^\times \to G$, which by \eqref{eq:isom} gives the desired $\rho$.

We can also describe the restriction $\rho_v$ of $\rho$ to $K_v^\times\subset I_K/K^\times$ in terms of the characteristic morphism $\rho_S$. To do so, we need the following simple lemma.

\begin{lemma}\label{lemma} Every place $v\in\places\smallsetminus S$ admits a uniformiser $\pi_v\in \OO_{S\cup\{v\}}^\times$, and $\pi_v$ is unique up to multiplication by $S$-units.
\end{lemma}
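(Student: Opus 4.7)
The plan is to exploit directly the assumption from \eqref{eq:S} that $\OO_S$ is a principal ideal domain. For $v \in \places\smallsetminus S$, the place $v$ corresponds to a nonzero prime ideal $\mathfrak{p}_v\subset\OO_S$, which by the PID hypothesis is of the form $\mathfrak{p}_v = (\pi_v)$ for some $\pi_v\in\OO_S$. I would then verify that this $\pi_v$ does the job.

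First, I would check that $\pi_v$ is a uniformiser at $v$: since $(\pi_v)=\mathfrak{p}_v$ in $\OO_S$, localisation gives $v(\pi_v) = 1$. Next, I would show $\pi_v\in\OO_{S\cup\{v\}}^\times$; for this, I need $w(\pi_v)=0$ for every place $w\notin S\cup\{v\}$. But the principal ideal $(\pi_v)\subset\OO_S$ equals $\mathfrak{p}_v$, which at a prime $w\neq v$ of $\OO_S$ localises to the unit ideal, so $w(\pi_v) = 0$. Together with $v(\pi_v)=1$, this guarantees that $\pi_v$ is a unit in $\OO_{S\cup\{v\}}$ (and also gives the uniformiser property).

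For uniqueness, if $\pi_v'\in\OO_{S\cup\{v\}}^\times$ is another uniformiser at $v$, then $\pi_v/\pi_v'\in K^\times$ satisfies $v(\pi_v/\pi_v')=0$ and $w(\pi_v/\pi_v')=0$ for $w\notin S\cup\{v\}$ since both numerator and denominator lie in $\OO_{S\cup\{v\}}^\times$. Hence $\pi_v/\pi_v'$ has trivial valuation at every $w\notin S$, that is, $\pi_v/\pi_v'\in\OO_S^\times$.

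There is no real obstacle here; the entire content of the lemma is the translation between ``prime ideal of $\OO_S$'' and ``$S\cup\{v\}$-integral element with $v$-valuation $1$ and trivial valuation elsewhere outside $S$''. The only hypothesis being used non-trivially is that $\OO_S$ is a PID, which is exactly the assumption we built into \eqref{eq:S}.
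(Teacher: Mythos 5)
Your proof is correct and takes essentially the same route as the paper: the paper's one-line proof identifies the admissible uniformisers with the generators of the prime ideal of $\OO_S$ corresponding to $v$, which is principal by \eqref{eq:S}, and you simply spell out the verification in more detail.
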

\begin{proof}
The admissible choices for $\pi_v$ are the generators of the prime ideal $(\OO_v\smallsetminus\OO_v^\times)\cap\OO_S$ of $\OO_S$, which is principal due to \eqref{eq:S}. 
\end{proof}

For $v\notin S$, we embed $K_v^\times$ into $\widehat{\OO}_S^\times$ as follows: we identify $\OO_{v}^\times$ with the $v$-component of $\widehat{\OO}_S^\times$. In addition,  
we fix a uniformiser $\pi_v\in \OO_{S\cup\{v\}}^\times$ at $v$ as in Lemma \ref{lemma}, which we map to the element of $\widehat{\OO}_S^\times$ that is $1$ at its $v$-component and $\pi_v^{-1}\in \OO_{S\cup\{v\}}^\times\subset \OO_{w}^\times$ at the component indexed by $w\notin S\cup\{v\}$.

\begin{lemma}\label{lem:embedding}
  Let $\rho_S:\widehat{\OO}_S^\times\to G$ be a characteristic morphism and $\rho$ the induced epimorphism $I_K/K^\times \to G$. Then $\rho$ vanishes identically on $K_v^\times$ for $v\in S$. For $v\notin S$, let the embedding $K_v^\times\to \widehat{\OO}_S^\times$ be as specified above. Then the following diagram commutes:
\begin{center}
  \begin{tikzpicture}[>=triangle 60]
\matrix[matrix of math nodes,column sep={60pt,between origins},row
sep={60pt,between origins},nodes={asymmetrical rectangle}] (s)
{
&|[name=a]| K_v^\times &|[name=b]| I_K/K^\times \\
&|[name=c]| \widehat{\OO}_S^\times &|[name=d]| G  \\
};
\draw[->] (a) edge (b)
          (a) edge (c)
          (b) edge node[left] {$\rho$} (d)
          (c) edge node[above] {$\rho_S$} (d);
        \end{tikzpicture}
\end{center}
\end{lemma}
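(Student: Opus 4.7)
The plan is to unwind the definitions of the induced epimorphism $\rho$ and the two embeddings $K_v^\times \hookrightarrow I_K/K^\times$ and $K_v^\times \to \widehat{\OO}_S^\times$, so that the statement reduces to a direct computation. Recall that $\rho$ was constructed as the composition $I_K/K^\times \xrightarrow{\sim} I_S/\OO_S^\times \xrightarrow{\bar p} \widehat{\OO}_S^\times / \overline{p(\OO_S^\times)} \xrightarrow{\overline{\rho_S}} G$. So for any idèle class in $I_K/K^\times$, to evaluate $\rho$ I would first replace its representative by an $S$-idèle (multiplying by a suitable element of $K^\times$), then project to $\widehat{\OO}_S^\times$, then apply $\rho_S$.

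For the first assertion (vanishing on $K_v^\times$ for $v \in S$), I would simply note that for $\alpha \in K_v^\times$ with $v \in S$, the idèle $(1,\ldots,1,\alpha,1,\ldots)$ already lies in $I_S = K_S^\times \times \widehat{\OO}_S^\times$, and its projection to $\widehat{\OO}_S^\times$ is the identity element. Hence $\rho_S(p(\cdot)) = \rho_S(1) = e_G$.

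For the second assertion, I would take $v \notin S$ and $\alpha \in K_v^\times$ and write $\alpha = \pi_v^n u$ with $n = v(\alpha)$ and $u \in \OO_v^\times$. The associated idèle $\iota = (1,\ldots,1,\alpha,1,\ldots) \in I_K$ is not in $I_S$ in general because its $v$-component has valuation $n$. I would correct this by multiplying by $\pi_v^{-n} \in K^\times$, which does not change the class modulo $K^\times$. Since $\pi_v \in \OO_{S\cup\{v\}}^\times$ (by Lemma~\ref{lemma}), the adjusted idèle $\pi_v^{-n}\iota$ has: $v$-component $u \in \OO_v^\times$; $w$-component $\pi_v^{-n} \in \OO_w^\times$ for $w \notin S \cup \{v\}$; and arbitrary but acceptable components $\pi_v^{-n} \in K_w^\times$ for $w \in S$. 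Thus $\pi_v^{-n}\iota \in I_S$, and its image under $p$ is the element of $\widehat{\OO}_S^\times$ with $v$-coordinate $u$ and $w$-coordinate $\pi_v^{-n}$ for $w \notin S \cup \{v\}$.

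Finally I would compare with the prescribed embedding $K_v^\times \to \widehat{\OO}_S^\times$, which sends $u \in \OO_v^\times$ to $u$ in the $v$-slot and $\pi_v$ to the element with $1$ in the $v$-slot and $\pi_v^{-1}$ elsewhere. Multiplicativity gives that $\alpha = \pi_v^n u$ maps to exactly the element computed above. Commutativity of the diagram then follows by applying $\rho_S$. There is no real obstacle; the only point requiring care is verifying that $\pi_v^{-n}\iota$ is indeed an $S$-idèle, which hinges on the defining property $\pi_v \in \OO_{S \cup \{v\}}^\times$ supplied by Lemma~\ref{lemma} and the independence of the choice of $\pi_v$ up to $\OO_S^\times$ (which is precisely what allows the induced map to be well-defined, since $\rho_S$ vanishes on $p(\OO_S^\times)$).
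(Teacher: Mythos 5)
Your proof is correct and follows essentially the same approach as the paper's: multiplying the idèle by a power of $\pi_v$ to land in $I_S$, then comparing the projection to $\widehat{\OO}_S^\times$ with the prescribed embedding. The paper's proof is terser, treating only "the key case" of the image of $\pi_v$ and leaving the trivial unit case and the decomposition $\alpha = \pi_v^n u$ implicit, whereas you spell these out.
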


\begin{proof}
The key case is the image of $\pi_v$ for $v\notin S$. In this case,
\begin{equation*}
  (1,\ldots,1,\pi_v,1,\ldots)K^\times = (\pi_v^{-1},\ldots,\pi_v^{-1},1,\pi_v^{-1},\ldots)K^\times,
\end{equation*}
and the latter id\`ele is in $I_S$ by our choice of $\pi_v$. Hence,
\begin{equation*}
  \rho(\pi_v) = (\rho_S\circ p)((\pi_v^{-1},\ldots,\pi_v^{-1},1,\pi_v^{-1},\ldots)) = \rho_S(\pi_v),
\end{equation*}
by definition of the embedding of $K_v^\times$ in $\widehat{\OO}_S^\times$.
\end{proof}

\subsection{Local norms}\label{sec:local_norms}
Here we construct characteristic morphisms whose induced epimorphisms $I_K/K^\times\to G$ have the properties \eqref{eq:rho_local_norms}, \eqref{eq:rho_surj} and \eqref{eq:rho_triv_S}. In particular, $\alpha_1,\ldots,\alpha_t$ are local norms from the corresponding extensions at all places. When $G$ is cyclic, then ${\bigwedge}^2G=0$, so \eqref{eq:rho_HNP} holds automatically and we are done. We will return to \eqref{eq:rho_HNP} for non-cyclic $G$ in \S \ref{sec:forcing_HNP}.

For a non-archimedean place $v\in\places$, we write $\F_v$ for the residue field at $v$ and $q_v=|\F_v|$. Given $e\in\N$ and a finite set $\{x_0,\ldots, x_n\}\subseteq K^\times$, we denote by $T(S;e;\psi;\{x_0, \ldots, x_n\})$ the set of places $v$ of $K$ that satisfy the following:
\begin{equation}
  \label{eq:local_norm_elementary}
  \begin{aligned}
	& v\in\spl_S(\psi),\\
	& q_v\equiv 1\bmod e,\\
	& \{x_0,\ldots,x_n\}\subset \OO_v^\times,\\
	& x_0,\ldots,x_n\text{ are $e$-th powers in the residue field }\F_v^\times.
 \end{aligned}
\end{equation}
Note that $T(S;e;\psi;\{x_0,\ldots,x_n\})$ contains precisely those places in $\places\smallsetminus S$ that split completely in the normal extension $L_\psi(\zeta_e,\sqrt[e]{x_0},\ldots,\sqrt[e]{x_n})/K$, where $\zeta_e$ is a primitive $e$-th root of unity in $\bar{K}$. Hence, by the Chebotarev density theorem, $T(S;e;\psi;\{x_0,\ldots,x_n\})$ is infinite and of density $1/[L_\psi(\zeta_e,\sqrt[e]{x_0},\ldots,\sqrt[e]{x_n}):K]$ in $\places$. Every $v\in T(S;e;\psi;\{x_0,\ldots,x_n\})$ satisfies in particular that
\begin{equation}\label{eq:F_v_isom}
\F_v^\times\otimes\Z/e\Z \simeq \Z/e\Z,
\end{equation}
as $e\mid |\F_v^\times|$. Now let $e$ be a multiple of the
exponent of $G$ and $k$ large enough, so we can fix an epimorphism
$\Phi:(\Z/e\Z)^k\to G$.  Recall that $\{\gamma_0,\ldots,\gamma_r\}$ is a set
of generators of $\OO_S^\times$. For any subset $T=\{v_1,\ldots,v_k\}\subset T(S;e;\psi; \{\gamma_0,\ldots,\gamma_r\})$ of $k$ places, we define the epimorphism
\begin{equation}\label{characteristic morphisms type T}
\rho_S^T:
\widehat{\OO_S}^\times\to \F_{v_1}^\times\times\cdots\times \F_{v_k}^\times\to  (\F_{v_1}^\times\times\ldots\times \F_{v_k}^\times)\otimes \Z/e\Z\xrightarrow{\simeq} (\Z/e\Z)^k\xrightarrow{\Phi} G,
\end{equation}
where the first two maps are the natural epimorphisms, and the third one comes from a choice of the isomorphisms \eqref{eq:F_v_isom}.

\begin{lemma}\label{lem:rho_S_T}
  The morphism $\rho_S^T$ defined in \eqref{characteristic morphisms type T} is a characteristic morphism, and the induced epimorphism $\rho^T:I_K/K^\times\to G$ satisfies \eqref{eq:rho_local_norms}, \eqref{eq:rho_surj} and \eqref{eq:rho_triv_S}.
\end{lemma}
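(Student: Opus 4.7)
The plan is to unpack the composition \eqref{characteristic morphisms type T} defining $\rho_S^T$ and verify each required property by tracing the diagram, with only one mildly substantive observation needed at the end. I would first check that $\rho_S^T$ is a characteristic morphism. Continuity and surjectivity are immediate from the factors of \eqref{characteristic morphisms type T}: the projection $\widehat{\OO}_S^\times \twoheadrightarrow \prod_{j=1}^k \F_{v_j}^\times$ is a continuous epimorphism, tensoring with $\Z/e\Z$ is a surjective quotient, the third arrow is the isomorphism guaranteed by \eqref{eq:F_v_isom}, and $\Phi$ is an epimorphism by construction. To verify vanishing on $p(\OO_S^\times)$, the remark following the definition of characteristic morphism lets me test only on the $S$-unit generators $\gamma_0,\ldots,\gamma_r$: by the defining condition $v_j \in T(S;e;\psi;\{\gamma_0,\ldots,\gamma_r\})$, each $\gamma_i$ is a unit at $v_j$ whose reduction is an $e$-th power in $\F_{v_j}^\times$, so its image vanishes after tensoring with $\Z/e\Z$.

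Property \eqref{eq:rho_triv_S} is then immediate from Lemma~\ref{lem:embedding}. For \eqref{eq:rho_surj}, I would apply Lemma~\ref{lem:embedding} to the unit subgroup $\OO_{v_j}^\times \subset K_{v_j}^\times$: the map $\rho^T_{v_j}|_{\OO_{v_j}^\times}$ then factors as the surjection $\OO_{v_j}^\times \twoheadrightarrow \F_{v_j}^\times$ followed by the $j$-th coordinate inclusion into $(\Z/e\Z)^k$ (via \eqref{eq:F_v_isom}) and then $\Phi$. Hence $\rho^T_{v_j}(K_{v_j}^\times)$ contains $\Phi$ of the $j$-th coordinate subgroup of $(\Z/e\Z)^k$, and summing over $j$ produces all of $\Phi((\Z/e\Z)^k)=G$. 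Since each $v_j$ lies in $\spl_S(\psi)$ by the choice of $T$, this yields \eqref{eq:rho_surj}.

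For \eqref{eq:rho_local_norms}, the case $v\in S$ is handled by \eqref{eq:rho_triv_S}. For $v\notin S$, the hypothesis $\{\alpha_1,\ldots,\alpha_t\}\subset\OO_S^\times$ from \eqref{eq:S} puts each $\alpha_i$ in $\OO_v^\times$, so Lemma~\ref{lem:embedding} reduces the computation of $\rho^T_v(\alpha_i)$ to applying $\rho_S^T$ to the id\`ele supported at the $v$-component. When $v\notin T$, the projection onto $\prod_j \F_{v_j}^\times$ already kills this. The only case requiring thought, and the main obstacle I would flag, is $v=v_j\in T$: the Chebotarev condition defining $T$ was imposed only on the generators $\gamma_i$ rather than directly on the $\alpha_i$, but because $\alpha_i \in \OO_S^\times$ is (up to a root of unity) a product of the $\gamma_i$'s by the $S$-unit theorem, its reduction modulo $v_j$ is also an $e$-th power in $\F_{v_j}^\times$, and hence vanishes after tensoring with $\Z/e\Z$.
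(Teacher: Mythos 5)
Your proposal is correct and follows essentially the same approach as the paper's own proof: reduce everything to the generators $\gamma_0,\ldots,\gamma_r$, note that the Chebotarev-type condition defining $T$ makes each $\gamma_m$ an $e$-th power in each $\F_{v_j}^\times$, and then pass to arbitrary $S$-units (hence to the $\alpha_i$) by multiplicativity. One small imprecision: the parenthetical ``(up to a root of unity)'' is unnecessary, since $\gamma_0$ is by construction a root of unity generating the torsion of $\OO_S^\times$, so every $\alpha_i$ is literally a product of powers of $\gamma_0,\ldots,\gamma_r$ with no leftover factor; this is what makes the final vanishing argument clean.
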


\begin{proof}
  For $\gamma\in K$ and $v\notin S$, write $\gamma_v$ for the local embedding $\gamma\in K_v^\times\subset\widehat{\OO}_S^\times$. We start by showing that $\gamma_v\in\ker\rho_S^T$ for all $v\notin S$ and $\gamma\in\{\gamma_0,\ldots,\gamma_r\}$. If $v\notin T$, this is obviously true by the construction of $\rho_S^T$. For $v\in T$, our choice of $T$ ensures that $\gamma$ is an $e$-th power in $\F_v^\times$, and hence vanishes in $\F_v^\times\otimes\Z/e\Z$.

  Since $\gamma_0,\ldots,\gamma_r$ generate $\OO_S^\times$, we have shown that $\rho_S^T$ vanishes at all local components $\gamma_v$, for $v\notin S$ and $\gamma\in\OO_S^\times$. This implies in particular that $p(\OO_S^\times)\subset \ker\rho_S^T$, so $\rho_S^T$ is a characteristic morphism. Moreover, as $\{\alpha_1,\ldots,\alpha_t\}\subset\OO_S^\times$ by \eqref{eq:S} and since the induced epimorphism $\rho^T$ vanishes on all of $K_v^\times$ for $v\in S$, we immediately get \eqref{eq:rho_local_norms} and \eqref{eq:rho_triv_S}. By the construction of $\rho_S^T$, it is also obvious that the condition \eqref{eq:rho_surj} is satisfied.
\end{proof}

\subsection{Forcing validity of the Hasse norm principle}\label{sec:forcing_HNP} Here we discuss how to choose the morphism $\rho_S^T$ from the previous section in order to guarantee that the induced epimorphism $\rho^T$  satisfies \eqref{eq:rho_HNP}. We will assume that $G$ is not cyclic, as otherwise there is nothing to do. 

Fix an epimorphism $\Phinew:(\Z/e\Z)^{\knew}\to G$, with some $\knew\geq 2$. Write $\kprimenew={\knew\choose 2}$ and take a set of $2\kprimenew$ distinct places $T=\{v_1,w_1,v_2,w_2,\ldots,v_{\kprimenew},w_{\kprimenew}\}$ such that
\begin{equation*}
  \{v_1,\ldots,v_{\kprimenew}\}\subset T(S;e;\psi;\{\gamma_0,\ldots,\gamma_r\})\text{ and }  \{w_1,\ldots,w_{\kprimenew}\}\subset T(S;e;1;\{\gamma_0,\ldots,\gamma_r\}).
\end{equation*}
We write 
\begin{equation*}
  G'=(\F_{v_1}^\times\times \F_{w_1}^\times\times\cdots\times\F_{v_{\kprimenew}}^\times\times\F_{w_{\kprimenew}}^\times)\otimes \Z/e\Z\simeq ((\Z/e\Z)^2)^{\kprimenew}
\end{equation*}
and consider the auxiliary morphism
\begin{equation}\label{eq:rho_prime}
  \rho_S':\widehat{\OO}_S^\times\to \F_{v_1}^\times\times \F_{w_1}^\times\times\cdots\times\F_{v_{\kprimenew}}^\times\times\F_{w_{\kprimenew}}^\times\to G'
\end{equation}
as in \eqref{characteristic morphisms type T}. (Formally, to fit the definition of $\rho_S^T$ in \eqref{characteristic morphisms type T}, take $(\Z/e\Z)^{2\kprimenew}\to G'$ to be the inverse of the isomorphism coming from \eqref{eq:F_v_isom}.) By Lemma \ref{lem:rho_S_T} (with $\psi=1$), this is a characteristic morphism for $G'$, so it induces an epimorphism $\rho':I_K/K^\times\to G'$. We will show later in \S \ref{sec:choosing_places} that the places in $T$ can be chosen in such a way that the following holds:
\begin{equation}\label{eq:basis_condition}
  \parbox{0.8\textwidth}{There is a $\Z/e\Z$-basis $e_1,e_1',\ldots,e_{\kprimenew},e_{\kprimenew}'$ of $G'$, such that $e_i,e_i'$ is a basis of the decomposition group $G_{v_i}'=\rho'(K_{v_i}^\times)$ for all $1\leq i\leq \kprimenew$.}
\end{equation}
Let $e_1,e_1',\ldots,e_{\kprimenew},e_{\kprimenew}'$ be a basis of $G'$ as in \eqref{eq:basis_condition}, and choose any basis $f_1,\ldots,f_{\knew}$ of $(\Z/e\Z)^{\knew}$, and any enumeration $(f_{m_i},f_{n_i})_{1\leq i\leq \kprimenew}$ of the $\kprimenew={\knew\choose 2}$ pairs $(f_m,f_n)$ with $1\leq m<n\leq \knew$. We define the epimorphism of $\Z/e\Z$-modules $\Psi:G'\to (\Z/e\Z)^{\knew}$ by
\begin{equation}\label{eq:def_Psi}
  \Psi(e_i)=f_{m_i} \text{ and }\Psi(e_i')=f_{n_i} \text{ for }1\leq i\leq \kprimenew,
\end{equation}
and correspondingly the epimorphism $\Phiprimenew = \Phinew\circ\Psi : G'\to G$. With this $\Phiprimenew$ and the set $T$ from above, we define the morphism
\begin{equation*}
  \rho_S^T = \Phiprimenew\circ\rho_S':\widehat{\OO_S}^\times\xrightarrow{\rho_S'} G' \xrightarrow{\Phiprimenew} G.
\end{equation*}

\begin{theorem}\label{thm:main_construction}
  Assume that \eqref{eq:basis_condition} holds. Then the morphism $\rho_S^T:\widehat{\OO}_S^\times\to G$ defined above is a characteristic morphism. The induced epimorphism $\rho=\rho^T:I_K/K^\times\to G$ satisfies \eqref{eq:rho_local_norms}--\eqref{eq:rho_triv_S}. In particular, the extension $L_{\rho}/K$ has Galois group $G$ and $\{\alpha_1,\ldots,\alpha_t\}\subset\norm_{L_{\rho}/K}(L_{\rho}^\times)$. 
\end{theorem}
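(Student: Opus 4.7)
The plan is to reduce everything to Lemma \ref{lem:rho_S_T} applied to the auxiliary group $G'$ with $\psi=1$, and then to transport the conclusions across the epimorphism $\Phi'=\Phi\circ\Psi$, using the basis condition \eqref{eq:basis_condition} to recover surjectivity both at the level of $G$ and of $\bigwedge^2 G$.

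First, the auxiliary morphism $\rho_S'$ of \eqref{eq:rho_prime} is a characteristic morphism $\widehat{\OO}_S^\times\to G'$ by Lemma \ref{lem:rho_S_T} applied to $G'$ with trivial $\psi=1$; in particular it vanishes on $p(\OO_S^\times)$. Since $\Phi':G'\to G$ is a continuous epimorphism of discrete groups, the composite $\rho_S^T=\Phi'\circ\rho_S'$ is a continuous epimorphism onto $G$ still vanishing on $p(\OO_S^\times)$, hence a characteristic morphism for $G$. By Lemma \ref{lem:embedding}, the induced epimorphism $\rho=\rho^T:I_K/K^\times\to G$ satisfies $\rho_v=\Phi'\circ\rho'_v$ at every place $v$.

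Conditions \eqref{eq:rho_local_norms} and \eqref{eq:rho_triv_S} then transfer from $\rho'$ to $\rho$ for free, since post-composition with $\Phi'$ can only enlarge local kernels and preserves triviality, and both conditions hold for $\rho'$ by Lemma \ref{lem:rho_S_T}. For \eqref{eq:rho_surj} and \eqref{eq:rho_HNP} the key input is \eqref{eq:basis_condition}: at each $v_i$ one has $\rho_{v_i}(K_{v_i}^\times)=\Phi'(\langle e_i,e_i'\rangle)=\Phi(\langle f_{m_i},f_{n_i}\rangle)$. As $i$ ranges over $1,\ldots,k'$, the pairs $(m_i,n_i)$ exhaust all pairs $1\leq m<n\leq k$, so $\sum_i\rho_{v_i}(K_{v_i}^\times)$ already contains $\Phi(f_1),\ldots,\Phi(f_k)$ and therefore equals $G$; since each $v_i$ lies in $\spl_S(\psi)$, this yields \eqref{eq:rho_surj}. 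The same bookkeeping for wedges shows that the image of $\bigwedge^2\rho_{v_i}(K_{v_i}^\times)\to\bigwedge^2 G$ contains $\Phi(f_{m_i})\wedge\Phi(f_{n_i})$, and as $(m_i,n_i)$ varies these exhaust a generating set of $\bigwedge^2 G$ (the $\Phi(f_j)$ generate $G$ by surjectivity of $\Phi$), giving \eqref{eq:rho_HNP}.

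For the ``In particular'' clause, $\Gal(L_\rho/K)\simeq G$ is immediate from surjectivity of $\rho$ together with global class field theory. Local class field theory applied with \eqref{eq:rho_local_norms} shows each $\alpha_i$ is a local norm from $L_\rho$ at every place of $K$, and \eqref{eq:rho_HNP} together with Theorem \ref{thm:HNP} yields the Hasse norm principle for $L_\rho/K$, so that the $\alpha_i$ are in fact global norms. The proof itself is essentially organised bookkeeping; the genuine obstacle has been pushed into the standing hypothesis \eqref{eq:basis_condition}, whose attainability must be supplied by the incremental construction in \S\ref{sec:choosing_places}.
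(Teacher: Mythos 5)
Your proof is correct and follows essentially the same route as the paper: you obtain the characteristic-morphism property and conditions \eqref{eq:rho_local_norms}, \eqref{eq:rho_triv_S} from Lemma \ref{lem:rho_S_T}, and derive \eqref{eq:rho_surj} and \eqref{eq:rho_HNP} from \eqref{eq:basis_condition} and the definition of $\Psi$ exactly as in the paper, with the paper applying Lemma \ref{lem:rho_S_T} directly to $\rho_S^T$ (as a map of the form \eqref{characteristic morphisms type T}) while you apply it to $\rho_S'$ and post-compose with $\Phiprimenew$ — a cosmetic difference only.
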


\begin{proof}
  Since $\rho_S^T$ is of the form \eqref{characteristic morphisms type T}, Lemma \ref{lem:rho_S_T} (with $\psi=1$) shows that $\rho_S^T$ is a characteristic morhism and that $\rho$ satisfies \eqref{eq:rho_local_norms} and \eqref{eq:rho_triv_S}. For the induced morphisms, note that $\rho=\Phiprimenew\circ\rho'$. Since $v_1,\ldots,v_{\kprimenew}\in\spl_S(\psi)$, condition \eqref{eq:rho_surj} follows immediately from our assumption \eqref{eq:basis_condition} and the fact that $\Phiprimenew$ is surjective.

  It remains to verify \eqref{eq:rho_HNP}. Since $f_1\ldots,f_{\knew}$ generate $(\Z/e\Z)^{\knew}$ and $\Phinew:(\Z/e\Z)^{\knew} \to G$ is an epimorphism, the group ${\bigwedge}^2G$ is generated by the elements $\Phinew(f_{m_i})\wedge \Phinew(f_{n_i})$ for $1\leq i\leq \kprimenew$. By our hypothesis \eqref{eq:basis_condition}  and the definition of $\Psi$ in \eqref{eq:def_Psi}, we see that $\{\Phinew(f_{m_i}),\Phinew(f_{n_i})\}\subset\rho_v(K_{v_i}^\times)$. Hence, $\Phinew(f_{m_i})\wedge\Phinew(f_{n_i})$ is in the image of the natural map ${\bigwedge}^2\rho_{v_i}(K_{v_i}^\times)\to{\bigwedge}^2G$ for all $1\leq i\leq \kprimenew$.
This is enough to show that
\begin{equation*}
  \bigoplus_{1\leq i\leq \kprimenew}{\bigwedge}^2 \rho_{v_i}(K_{v_i}^\times) \to {\bigwedge}^2G
\end{equation*}
is surjective. As $v_1,\ldots,v_{\kprimenew}\in\spl_S(\psi)$, we have proved \eqref{eq:rho_HNP}.
\end{proof}

\subsection{Supply of places}\label{sec:supply} We still need to specify how to choose the set $\{v_1,w_1\ldots,v_{\kprimenew},w_{\kprimenew}\}$ of places from \S \ref{sec:forcing_HNP} in order to guarantee that $\rho'$ satisfies \eqref{eq:basis_condition}.

For $e\in\N$, a finite set $\{x_0,\ldots,x_n\}\subset K^\times$ and $y\in K^\times$, we define $T(S;e;\{x_0,\ldots,x_n\}; y)$ as the set of all places $w\in T(S;e;1;\{x_0,\ldots,x_n\})$ such that $y\in\OO_w^\times$ and the polynomial $X^e-y$ modulo $w$ is irreducible over the residue field $\F_w$.

Let us show that these sets provide an ample supply of places in those cases that will be of interest to us.

\begin{lemma}\label{lem:cheb}
  Suppose there is a place $v\in T(S;e;1;\{x_0,\ldots,x_n\})$ such that $y$ is a uniformiser at $v$. Then the set $T(S;e;\{x_0,\ldots,x_n\}; y)$ is infinite and of natural density
  \begin{equation*}
\frac{\phi(e)}{e[K(\zeta_e,\sqrt[e]{x_1},\ldots,\sqrt[e]{x_n}):K]}>0
\end{equation*}
  in the set of all places $v$ of $K$ when ordered by $q_v$.
\end{lemma}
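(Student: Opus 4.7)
The plan is to apply the Chebotarev density theorem to a suitable Galois extension of $K$ in which the places of $T(S;e;\{x_0,\ldots,x_n\};y)$ correspond to those whose Frobenius lies in a prescribed conjugation-stable subset.

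First, I would set $M = K(\zeta_e, \sqrt[e]{x_0}, \ldots, \sqrt[e]{x_n})$ and $L = M(\sqrt[e]{y})$, both Galois over $K$, and establish $[L:M] = e$. The hypothesis furnishes $v \in T(S;e;1;\{x_0,\ldots,x_n\})$ with $y$ a uniformiser at $v$; since such $v$ splits completely in $M$, any place $v' \mid v$ of $M$ satisfies $M_{v'} = K_v$, which contains $\mu_e$ and at which $y$ remains a uniformiser. Hence $M_{v'}(\sqrt[e]{y})/M_{v'}$ is totally ramified of degree $e$, forcing $[L:M] \geq e$, and so $[L:M] = e$. In particular, $\Gal(L/M)$ is cyclic of order $e$ via the Kummer isomorphism $\sigma \mapsto \sigma(\sqrt[e]{y})/\sqrt[e]{y} \in \mu_e$.

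Second, I would translate the conditions. By the discussion following~\eqref{eq:local_norm_elementary}, $T(S;e;1;\{x_0,\ldots,x_n\})$ is precisely the set of places in $\places \smallsetminus S$ splitting completely in $M$. Let $T_y \subset \places$ be the finite set of places where $y$ is not a unit, and let $w \notin S \cup T_y$ be unramified in $L$. Then $w \in T(S;e;1;\{x_0,\ldots,x_n\})$ is equivalent to $\mathrm{Frob}_w \subseteq \Gal(L/M)$; assuming this, $\mu_e \subset \F_w$ and standard unramified Kummer theory gives $\sigma(\sqrt[e]{y}) \equiv y^{(q_w-1)/e}\sqrt[e]{y} \pmod{w}$ for any $\sigma \in \mathrm{Frob}_w$. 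Hence $\sigma$ generates $\Gal(L/M)$ if and only if $y^{(q_w-1)/e}$ is a primitive $e$-th root of unity in $\F_w$, equivalently $X^e - y$ is irreducible over $\F_w$. Writing $C$ for the set of generators of $\Gal(L/M)$, I conclude that $w \in T(S;e;\{x_0,\ldots,x_n\};y)$ if and only if $\mathrm{Frob}_w \subseteq C$.

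Finally, I would verify that $C$ is stable under conjugation by all of $\Gal(L/K)$, as required to apply Chebotarev to $C$. If $\sigma \in \Gal(L/M)$ acts by $\sqrt[e]{y} \mapsto \zeta_e^a \sqrt[e]{y}$ and $\tau \in \Gal(L/K)$ acts on $\mu_e$ via the cyclotomic character $c(\tau) \in (\Z/e\Z)^\times$, a direct computation yields $(\tau\sigma\tau^{-1})(\sqrt[e]{y}) = \zeta_e^{c(\tau)a}\sqrt[e]{y}$, so $a \in (\Z/e\Z)^\times$ is preserved. The Chebotarev density theorem then yields density $|C|/[L:K] = \phi(e)/(e[M:K]) > 0$, and the finitely many excluded places (ramified in $L$ or lying in $S \cup T_y$) do not affect this density. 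The main obstacle is the degree bound $[L:M]=e$: without the hypothesis on $v$, $y$ could already be an $e$-th power in $M$, collapsing $L$ to $M$, and $T(S;e;\{x_0,\ldots,x_n\};y)$ could be empty.
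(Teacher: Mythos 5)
Your argument is correct and takes essentially the same route as the paper's: build the Kummer tower $K\subset M\subset L=M(\sqrt[e]{y})$, use the hypothesised place $v$ to force $[L:M]=e$ (the paper invokes Eisenstein at a place of $M$ above $v$, you deduce total ramification locally --- the same fact), identify $T(S;e;\{x_0,\ldots,x_n\};y)$ as the set of places whose Frobenius class consists of generators of $\Gal(L/M)$, and apply Chebotarev; you merely spell out the conjugation-stability of that class and the unramified Kummer computation behind the irreducibility condition, which the paper treats as standard. Incidentally, your inclusion of $\sqrt[e]{x_0}$ in $M$ is the correct reading: the paper's $F$ (and the degree displayed in the lemma's density) omits $x_0$, which appears to be a typo, since $T(S;e;1;\{x_0,\ldots,x_n\})$ imposes the $e$-th-power-residue condition on $x_0$ just as on the other $x_i$.
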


\begin{proof}
  Write $F = K(\zeta_e,\sqrt[e]{x_1},\ldots,\sqrt[e]{x_n})$. Since $v$ splits completely in $F$, the element $y$ is a uniformiser for ever place $\tilde{v}$ of $F$ above $v$. Therefore, $f=X^e-y$ is irreducible over $F$ by Eisenstein's criterion in $F_{\tilde{v}}$, and hence the extension $F(\sqrt[e]{y})/F$ is cyclic of degree $e$.

The set $T(S;e;\{x_0,\ldots,x_n\}; y)$ is precisely the set of places $w\in \spl_S(F)$, such that all places $\tilde{w}$ of $F$ above $w$ are inert in $F(\sqrt[e]{y})$.

These are precisely the places $w\notin S$ that do not divide $y$ and whose Frobenius class in $\Gal(F(\sqrt[e]{y})/K)$ consists of generators of the normal subgroup $\Gal(F(\sqrt[e]{y})/F)\simeq \Z/e\Z$. The desired result follows from the Chebotarev density theorem.
\end{proof}

\begin{lemma}\label{lem:irreducibility_enough}
  If $w\in T(S;e;\{x_0,\ldots,x_n\};y)$, then $(y\bmod w)\otimes 1$ generates $\F_w^\times\otimes\Z/e\Z$.
\end{lemma}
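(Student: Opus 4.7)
The plan is to reduce the statement to a purely field-theoretic fact about irreducibility of binomials $X^e - y$ over $\F_w$. First I would note that the hypothesis $w \in T(S;e;1;\{x_0,\ldots,x_n\})$ forces $q_w \equiv 1 \pmod e$, so $e$ divides $|\F_w^\times|$ and the quotient $\F_w^\times / \F_w^{\times e}$, which is canonically identified with $\F_w^\times \otimes \Z/e\Z$, is cyclic of order exactly $e$. It then suffices to show that the class of $y \bmod w$ has order $e$ in this cyclic group.

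Equivalently, for every prime $p$ dividing $e$, the image of $y$ should not lie in the subgroup of $p$-th powers of the quotient. Since $p \mid e$ gives $\F_w^{\times e} \subset \F_w^{\times p}$, this in turn reduces to showing that $y \bmod w \notin \F_w^{\times p}$ for every prime $p \mid e$. I would argue this by contrapositive: assume $y \equiv z^p \pmod w$ for some $z \in \F_w^\times$ and some prime $p \mid e$, write $e = pm$, and exploit the presence of a primitive $p$-th root of unity $\zeta \in \F_w$ (guaranteed by $p \mid e \mid q_w - 1$) to produce the Kummer-type factorization
\[
X^e - y \;=\; X^{pm} - z^p \;=\; \prod_{i=0}^{p-1}\bigl(X^m - \zeta^i z\bigr)
\]
in $\F_w[X]$. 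Since $m < e$, this is a non-trivial factorization, contradicting the hypothesis that $X^e - y$ is irreducible modulo $w$.

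The argument has essentially no obstacle: the only point that genuinely uses the assumption $q_w \equiv 1 \pmod e$ is the availability of $\zeta$, after which the rest is a single line of polynomial algebra. In particular, no appeal to deeper irreducibility criteria (such as Capelli's theorem) is needed, because over the finite field $\F_w$ the relevant roots of unity are automatically present thanks to the Chebotarev-type condition built into the definition of $T(S;e;1;\cdot)$.
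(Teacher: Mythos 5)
Your proof is correct and follows essentially the same route as the paper: reduce the generation statement to ``$y\bmod w\notin\F_w^{\times p}$ for every prime $p\mid e$'', and then deduce this from the irreducibility of $X^e-y$. The only difference is that the paper treats that last implication as ``clearly satisfied'' whereas you substantiate it with a Kummer-type factorization (in fact the single factor $X^{e/p}-z$ already suffices, without invoking $\zeta$).
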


\begin{proof}
  Let $a\in \F_w^\times$. Then, $a\otimes 1$ generates $\F_w^\times\otimes\Z/e\Z$ if and only if none of the elements $a,a^2,\ldots,a^{e-1}$ are in $\F_w^{\times e}$. But $a^j\in\F_w^{\times e}$ if and only if $a\in\F_w^{\times e/\gcd(j,e)}$, and thus $a\otimes 1$ generates $\F_w^{\times e}$ if and only if $a\notin \F_w^{\times p}$ for all primes $p\mid e$. The latter condition is clearly satisfied if $X^e-a$ is irreducible over $\F_w$.
\end{proof}

\subsection{Choosing the right places}\label{sec:choosing_places}
With our supply of places guaranteed, we choose the the places $v_1,w_1,\ldots,v_{\kprimenew},w_{\kprimenew}$ as follows. Take $v_1,\ldots,v_{\kprimenew}$ to be any $\kprimenew$ distinct elements of $T(S;e;\psi;\{\gamma_0,\ldots,\gamma_r\})$. We fix a uniformiser $\pi_i\in\OO_{S\cup\{v_i\}}^\times$ for each $v_i$, which exists by Lemma \ref{lemma}.

Moreover, we choose each place $w_i\in T(S;e;\{\gamma_0,\ldots,\gamma_r\}; u_i\pi_i)\smallsetminus\{v_1,\ldots,v_{\kprimenew}\}$, with certain local units $u_i\in K\cap\OO_{v_i}^\times$, to be determined inductively. As $u_i\pi_i$ is a uniformiser at $v_i$, the hypothesis of Lemma \ref{lem:cheb} is satisfied.

Recall from the discussion leading up to Lemma \ref{lem:embedding} that each $K_{v_i}^\times$ is embedded into $\widehat{\OO}_S^\times$ by sending $\OO_{v_i}^\times$ to its factor in $\widehat{\OO}_S^\times$, and $\pi_i$ to $(\pi_i^{-1},\ldots,\pi_i^{-1},1,\pi_i^{-1},\ldots)$.
We will now determine the $u_i$ and $w_i$ inductively, such that the following property holds for all $i$:
\begin{equation}\label{eq:induction}
  \parbox{0.7\textwidth}{The images of $\pi_1,\ldots,\pi_i\in\widehat{\OO}_S^\times$ in $(\F_{w_1}^\times\times\cdots\times\F_{w_i}^\times)\otimes\Z/e\Z$ generate $(\F_{w_1}^\times\times\cdots\times\F_{w_i}^\times)\otimes\Z/e\Z$.} 
\end{equation}

For $i=1$, we take $u_1=1$ and choose $w_1\in T(S;e;\{\gamma_0,\ldots,\gamma_r\}; \pi_1)\smallsetminus\{v_1,\ldots,v_{\kprimenew}\}$. Then the image of $\pi_1$ under
\begin{equation*}
\OO_{S\cup\{v_1\}}^\times\to \OO_{w_1}^\times\to\F_{w_1}^\times\to\F_{w_1}^\times\otimes\Z/e\Z
\end{equation*}
generates $\F_{w_1}^\times\otimes\Z/e\Z$, as $X^e-\pi_1$ is irreducible over $\F_{w_1}$. Therefore, we see that $\F_{w_1}^\times\otimes\Z/e\Z$ is also spanned by the image $(\pi_1^{-1}\bmod w_1)\otimes 1$ of $\pi_1\in K_{v_1}\subset\widehat{\OO}_S^\times$ under $\widehat{\OO}_S^\times \to \F_{w_1}^\times\otimes\Z/e\Z$.

For $j>1$, assume that we have determined $u_1,\ldots,u_{j-1}$ and chosen $w_1,\ldots,w_{j-1}$ such that \eqref{eq:induction} holds for $i=j-1$. Then there are $c_1,\ldots,c_{j-1}\in\{0,\ldots,e-1\}$ such that the elements $\pi_j$ and $\pi_1^{c_1}\cdots \pi_{j-1}^{c_{j-1}}$ of $\widehat{\OO}_S^\times$ have the same image in $(\F_{w_1}^\times\times\cdots\times\F_{w_{j-1}}^\times)\otimes\Z/e\Z$.

We set $u_j=\pi_1^{-c_1}\cdots \pi_{j-1}^{-c_{j-1}}\in K\cap\OO_{v_j}^{\times}$ and choose $w_j\in T(S;e;\{\gamma_0,\ldots,\gamma_r\}; u_j\pi_j)\smallsetminus\{v_1,\ldots,v_{\kprimenew},w_1,\ldots,w_{j-1}\}$. Consider the projection
\begin{equation*}
   P : (\F_{w_1}^\times\times\cdots\times\F_{w_{j}}^\times)\otimes\Z/e\Z \to (\F_{w_1}^\times\times\cdots\times\F_{w_{j-1}}^\times)\otimes\Z/e\Z.
\end{equation*}
Then the images of $\pi_1,\ldots,\pi_{j-1}\in\widehat{\OO}_S^\times$ generate the range of $P$ by our inductive hypothesis. By construction, the image of $\pi_1^{-c_1}\cdots\pi_{j-1}^{-c_{j-1}}\pi_j\in\widehat{\OO}_S^\times$ is in $\ker P$. It even generates $\ker P$, since $X^e-u_j\pi_j$ is irreducible over $\F_{w_j}$, so $((u_j\pi_j)^{-1}\bmod w_j)\otimes 1$  generates $\F_{w_j}^\times\otimes\Z/e\Z$. This shows that \eqref{eq:induction} holds for $i=j$, as desired.

Finally, let us show that the places chosen above achieve what we want to satisfy the hypothesis of Theorem \ref{thm:main_construction}.

\begin{theorem}\label{thm:choose_places}
  Let the set $T=\{v_1,w_1,\ldots,v_{\kprimenew},w_{\kprimenew}\}\subset T(S;e;1;\{\gamma_0,\ldots,\gamma_r\})$ be chosen as described above. Then the epimorphism $\rho':I_K/K^\times\to G'$, induced by the auxiliary morphism $\rho_S'$ defined in \eqref{eq:rho_prime}, satisfies \eqref{eq:basis_condition}. 
\end{theorem}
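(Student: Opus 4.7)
The plan is to exhibit an explicit basis of $G'$ coming directly from the decomposition groups. For each $i\in\{1,\ldots,\kprimenew\}$, I would set $e_i\in G'$ to be the $\rho'$-image of any element of $\OO_{v_i}^\times$ whose reduction generates $\F_{v_i}^\times\otimes\Z/e\Z$, and set $e_i':=\rho'(\pi_i)\in G'$. Since $K_{v_i}^\times=\OO_{v_i}^\times\cdot \pi_i^{\Z}$, the pair $\{e_i,e_i'\}$ automatically generates the decomposition group $G'_{v_i}=\rho'(K_{v_i}^\times)$. Thus it suffices to prove that $\{e_1,e_1',\ldots,e_{\kprimenew},e_{\kprimenew}'\}$ is a $\Z/e\Z$-basis of $G'$; and since $G'\simeq(\Z/e\Z)^{2\kprimenew}$ is free of rank $2\kprimenew$, any $2\kprimenew$-element generating set is automatically a basis by cardinality. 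So the task reduces to showing generation.

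To that end, I would decompose $G'=A\oplus B$ with $A=\bigoplus_{i=1}^{\kprimenew}A_i$ and $B=\bigoplus_{i=1}^{\kprimenew}B_i$, where $A_i=\F_{v_i}^\times\otimes\Z/e\Z$ and $B_i=\F_{w_i}^\times\otimes\Z/e\Z$; each factor is cyclic of order $e$, since $q_{v_i},q_{w_i}\equiv 1\pmod{e}$ by construction of $T$. Lemma~\ref{lem:embedding} shows that the $\rho'$-image of $\OO_{v_i}^\times$ lies entirely in the summand $A_i$ and surjects onto it, so $e_i$ generates $A_i$, and hence $\{e_1,\ldots,e_{\kprimenew}\}$ generates $A$.

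It then remains to check that $e_1',\ldots,e_{\kprimenew}'$ generate the quotient $G'/A\cong B$. By Lemma~\ref{lem:embedding}, $\pi_i\in K_{v_i}^\times$ embeds into $\widehat{\OO}_S^\times$ as the id\`ele having $1$ at the slot $v_i$ and $\pi_i^{-1}$ at every other slot; consequently $e_i'$ has vanishing $A_i$-component, and its image under the projection $G'\to B$ coincides with the image of $\pi_i\in\widehat{\OO}_S^\times$ under the composite $\widehat{\OO}_S^\times\to B$. This is exactly the situation controlled by~\eqref{eq:induction}: applied with $i=\kprimenew$, it asserts that the images of $\pi_1,\ldots,\pi_{\kprimenew}$ in $B_1\oplus\cdots\oplus B_{\kprimenew}=B$ generate $B$. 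Combining with the previous paragraph yields generation of $G'$, hence a basis, hence~\eqref{eq:basis_condition}.

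The main obstacle is purely one of bookkeeping — in particular, one must remember that the embedding $K_{v_i}^\times\hookrightarrow\widehat{\OO}_S^\times$ sends $\pi_i$ to an id\`ele with $\pi_i^{-1}$ (not $\pi_i$) off the $v_i$-slot — but that sign convention is already encoded in Lemma~\ref{lem:embedding} and was folded into the formulation of~\eqref{eq:induction} during the inductive construction, so once those references are invoked the proof is essentially a dimension count plus one appeal to~\eqref{eq:induction}.
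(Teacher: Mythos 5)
Your proof is correct and follows essentially the same route as the paper: you take the same basis elements $e_i$ (from $\OO_{v_i}^\times$, landing in the factor $\F_{v_i}^\times\otimes\Z/e\Z$) and $e_i'=\rho'(\pi_i)$, decompose $G'$ into the $v$-part and the $w$-part, use Lemma~\ref{lem:embedding} to control the $v$-part, and invoke \eqref{eq:induction} at $i=\kprimenew$ for the $w$-part; the only cosmetic difference is that you phrase the final step as a cardinality argument for generating sets of $(\Z/e\Z)^{2\kprimenew}$ while the paper speaks of bases of $\ker P$ and $\operatorname{im} P$, which amounts to the same thing.
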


\begin{proof}
  For each $1\leq i\leq \kprimenew$, we have $K_{v_i}^\times=\OO_{v_i}^\times\oplus \langle\pi_i\rangle$. Using Lemma \ref{lem:embedding}, we see that $\rho'(\OO_{v_i}^\times)$ is the factor $\F_{v_i}^\times\otimes\Z/e\Z\simeq\Z/e\Z$ of $G'$. Let $e_i$ be a generator of this factor.

  Moreover, take $e_i'$ to be the image of $\pi_i$ in $G'$. Then $e_i,e_i'$ is a basis of $\rho'(K_{v_i}^\times)$. Consider the projection $P : G' \to (\F_{w_1}^\times\times\cdots\times\F_{w_{\kprimenew}}^\times)\otimes\Z/e\Z$. Then clearly $e_1,\ldots,e_{\kprimenew}$ is a basis of $\ker P\cong (\F_{v_1}^\times\times\cdots\times\F_{v_{\kprimenew}})\otimes \Z/e\Z$. Moreover, due to \eqref{eq:induction}, the images $P(e_1'),\ldots,P(e_{\kprimenew}')$ form a basis of the image of $P$. 
\end{proof}

\section{Computations with characteristic morphisms}\label{sec:computation}
Let us discuss how to compute a characteristic morphism of the form $\rho_S^T$ as in \eqref{eq:rho_prime} and the map $\Phiprimenew:G'\to G$ in practice, and how to infer from this data some information about the $G$-extension defined by it. We will focus on the case where $\psi=1$, in order to avoid having to specify how $\psi$ is represented. We assume to be given the set $S$ of places satisfying \eqref{eq:S}, the  epimorphism $\Phinew:(\Z/e\Z)^{\knew}\to G$, and $\kprimenew={\knew\choose 2}$.

We need to find places $v_1,w_1,\ldots,v_{\kprimenew},w_{\kprimenew}$ such that \eqref{eq:induction} is satisfied for all $1\leq i\leq \kprimenew$.

\subsection{Finding the places $v_i$}\label{sec:finding_vi}
First, we need a set of generators $\{\gamma_0,\ldots,\gamma_r\}$ for the $S$-units $\OO_S^\times$, which can be computed by well-known algorithms implemented in many computer algebra systems. 

As $\psi=1$, the places $v_1,\ldots,v_{\kprimenew}$ have to be members of the set $T(S; e; 1; \{\gamma_0,\ldots,\gamma_r\})$, so they need to be in $\places\smallsetminus S$ and such that
\begin{equation}\label{eq:search_cond_v}
  q_v\equiv 1\bmod e \text{, and the residues of }\gamma_0,\ldots,\gamma_r\text{ are $e$-th powers in }\F_v^\times.
\end{equation}
These places have positive density among all places of $K$ by the Chebotarev density theorem, so we can produce enough of them by an exhaustive search. Effective lower bound versions of Chebotarev's theorem (e.g.~\cite[(3.2)]{Thorner}) guarantee that we will not be unlucky for too long. Note that the condition of $\gamma$ being an $e$-th power in $\F_v^\times$, with $q_v\equiv 1\bmod e$, can be easily checked using Euler's criterion: it holds if and only if $\gamma^{(q_v-1)/e}\equiv 1\bmod v$.

Once $v_i$ has been specified, we fix a residue class $b_i\in\F_{v_i}^\times$ that generates $\F_{v_i}^\times/\F_{v_i}^{\times e}$, 
for example a primitive root in $\F_{v_i}^\times$.

We also need a uniformiser $\pi_i\in\OO_{S\cup \{v_i\}}^\times$ for each of the places $v_i$, whose existence is guaranteed by Lemma \ref{lemma}. It can be found, for example, by computing generators $\{g_0,\ldots,g_{r+1}\}$ of $\OO_{S\cup \{v_i\}}^\times$ and solving the linear Diophantine equation $c_0v_i(g_0)+\cdots+c_{r+1}v_i(g_{r+1})=1$ for the exponents in $\pi_i=g_0^{c_0}\cdots g_{r+1}^{c_{r+1}}$, with $v_i(\cdot)$ the exponential valuation at $v_i$.

\subsection{Finding the places $w_i$} The place $w_i$ has to be in $T(S;e;\{\gamma_0,\ldots,\gamma_r\},u_i\pi_i)$. The element $u_i\in \OO_{S\cup\{v_i\}}$ is specified by $u_1=1$ and inductively for $i>1$, as explained in \S \ref{sec:choosing_places}. We can determine $u_i$ by linear algebra with the images of $\pi_1,\ldots,\pi_i\in \widehat{\OO}_S^\times$ in
\begin{equation*}
  (\F_{w_1}^\times\times\cdots\times\F_{w_{i-1}}^\times)\otimes\Z/e\Z
\simeq (\Z/e\Z)^{i-1}. 
\end{equation*}
To specify the above isomorphism, fix elements $b_j'\in\F_{w_j}^\times$,
each generating $\F_{w_j}^\times/\F_{w_j}^{\times e}$,
and identify
\begin{equation*}
  (b_1'^{a_1},\ldots,b_{i-1}'^{a_{i-1}})\otimes 1\quad\text{ with }\quad(a_1,\ldots,a_{i-1})\in(\Z/e\Z)^{i-1}.
\end{equation*}
Let $l_{m,j}'\in\Z/e\Z$ be such that $\pi_j^{-1}\bmod w_m$ and $(b_m')^{l_{m,j}'}$ differ only by an $e$-th power in $\F_{w_m}^\times$. For example, if $b_m'$ is a primitive root of $\F_{w_m}^\times$, then $l_{m,j}'$ can be taken as the discrete logarithm of $\pi_j^{-1}$  with respect to the generator $b_m'$ of $\F_{w_m}^\times$, modulo $e$. Recalling how the uniformiser $\pi_j\in K_{v_j}^\times$ is embedded into $\widehat{\OO}_S^\times$, we see that the image of $\pi_j$ in $(\Z/e\Z)^{i-1}$ is then $(l_{1,j}',\ldots,l_{i-1,j}')$. Hence, the exponents in $u_i=\pi_1^{-c_1}\cdots\pi_{i-1}^{-c_{i-1}}$ can be determined by solving the $\Z/e\Z$-linear system
\begin{equation}\label{eq:ui_equations}
  \begin{pmatrix}
    l_{1,1}' & \cdots & l_{1,i-1}'\\
    \vdots & & \vdots\\
    l_{i-1,1}' & \cdots & l_{i-1,i-1}'
  \end{pmatrix}\cdot
  \begin{pmatrix}
    c_1\\\vdots\\c_{i-1}
  \end{pmatrix}=
  \begin{pmatrix}
    l_{1,i}'\\\vdots\\l_{i-1,i}'
  \end{pmatrix}.
\end{equation}
Once $u_i$ has been computed, we can again find $w_i\in T(S;e;\{\gamma_0,\ldots,\gamma_r\},u_i\pi_i)$ by an exhaustive search. In addition to \eqref{eq:search_cond_v}, it has to satisfy that the reduction of  $X^e-u_i\pi_i$ is irreducible over $\F_{w_j}$. Actually, we only need the potentially weaker (if $4\mid e$) condition that $(u_i\pi_i\mod w_i)\otimes 1$ generates $\F_{w_i}^\times\otimes \Z/e\Z$. As explained in the proof of Lemma \ref{lem:irreducibility_enough}, it is enough to verify that
\begin{equation*}
u_i\pi_i\bmod w_i \notin \F_{w_i}^{\times p}\quad\text{ for all primes $p\mid e$},
\end{equation*}
which can again be checked by Euler's criterion. 

As the conditions for the $w_i$ are more restrictive than for the $v_i$, we can potentially get smaller places by searching for $w_j$ as soon as possible, i.e.~as soon as $v_{j}$ has been determined, instead of computing first all the $v_i$ and then the $w_i$.

\subsection{Computing $\Phiprimenew$} In addition to the $l_{m,j}'$  already computed, let $l_{m,j}\in\Z/e\Z$ be such that $\pi_j^{-1}\bmod v_m$ differs from $b_m^{l_{m,j}}$ only by an $e$-th power in $\F_{v_m}^\times$, and $l_{m,j}=0$ if $j=m$.

Let us identify our chosen roots $b_i\in \F_{v_i}^\times$ with the basis elements $(1,\ldots,1,b_i,1\ldots,1)\otimes 1 \in G'$, and similarly for the $b_i'$. We call the resulting basis $b_1,b_1',\ldots,b_{\kprimenew},b_{\kprimenew}'$ the \emph{standard basis} of $G'$. Let us compare this basis to the basis $e_1,e_1',\ldots,e_{\kprimenew},e_{\kprimenew}'$ of $G'$, satisfying \eqref{eq:basis_condition}, that is constructed in the proof of Theorem \ref{thm:choose_places}. The matrix describing the change of basis from the standard basis to the $e_i,e_i'$ is
\begin{equation}\label{eq:change_of_basis}
  A =
  \begin{pmatrix}
    1       & l_{1,1}  & 0     & l_{1,2}  & \cdots & 0       & l_{1,\kprimenew}\\
    0       & l_{1,1}' & 0     & l_{1,2}' & \cdots & 0       & l_{1,\kprimenew}' \\
    0       & l_{2,1}  & 1     & l_{2,2}  & \cdots & 0       & l_{2,\kprimenew} \\
    0       & l_{2,1}' & 0     & l_{2,2}' & \cdots & 0       & l_{2,\kprimenew}' \\
    \vdots  & \vdots  & \vdots & \vdots  & \ddots & \vdots  & \vdots  \\
    0       & l_{\kprimenew,1}  & 0      & l_{\kprimenew,2} & \cdots & 1       & l_{\kprimenew,\kprimenew}  \\
    0       & l_{\kprimenew,1}' & 0      & l_{\kprimenew,2}'& \cdots & 0       & l_{\kprimenew,\kprimenew}'
 \end{pmatrix}.
\end{equation}
Let $B$ be the matrix representing the epimorphism $\Psi:G'\to(\Z/e\Z)^{\knew}$ from \eqref{eq:def_Psi} with respect to the bases $e_i,e_i'$ and $f_1,\ldots,f_{\knew}$, and $C$ a matrix representing $\Phinew:(\Z/e\Z)^{\knew}\to G$ with respect to $f_1,\ldots,f_{\knew}$ and any system $g_1,\ldots,g_{\knew}$ of generators for $G$. Then the epimorphism $\Phiprimenew:G'\to G$ is represented with respect to the standard basis of $G'$ and the generators $g_1,\ldots,g_{\knew}$ of $G$ by the matrix $R=CBA^{-1}$.

With the places $v_1,w_1,\ldots,v_{\kprimenew},w_{\kprimenew}$ defining $G'$, the standard basis $b_1,b_1',\ldots,b_{\kprimenew},b_{\kprimenew}'$ of $G$ and the matrix $R$ describing $\Phiprimenew$, we have now computed all the data required to describe the characteristic morphism $\rho_S^T:\widehat{\OO}_S^\times \to G$ from Theorem \ref{thm:main_construction}, and thus the induced epimorphism $\rho:I_K/K^\times\to G$ in a convenient way.

\subsection{Conductor, Artin map and  equations}\label{sec:prime_data}
The epimorphism $\rho:I_K/K^\times\to G$ induced by $\rho_S^T$ defines an extension $L_\rho/K$ together with an isomorphism $\Gal(L_\rho/K)\to G$. Let us see how to easily infer some information about $L_\rho$ from our description of $\rho_S^T$.

First of all, the places $v$ of $K$ that ramify in $L_\rho$ are those, for which the inertia group $\rho(\OO_v^\times)$ does not vanish in $G$. By our construction of $\rho_S'$ in \eqref{eq:rho_prime}, only places in $\{v_1,w_1,\ldots,v_{\kprimenew},w_{\kprimenew}\}$ can ramify. More precisely, the ramified places are all those $v_i,w_i$ for which the corresponding standard basis element $b_i$ or $b_i'$ is not in the kernel of $\Phiprimenew$. If we write $(r_1,\ldots,r_{\knew})^T$ for the corresponding column of $R$ and write $G$ additively, this means that $r_1g_1+\cdots+r_{\knew}g_{\knew}\neq 0$ in $G$. All these places are tamely ramified, so the conductor ideal of $L_\rho$ is just the product of the prime ideals corresponding to these places.

Every place $v\in S$ is split completely, as then $\rho(K_v^\times)$ vanishes in $G$. For places $v\notin S\cup\{v_1,w_1,\ldots,v_{\kprimenew},w_{\kprimenew}\}$, we can compute the Artin symbol $(L_\rho/K, v)\in G$ as follows: compute a uniformiser $\pi_v\in\OO_{S\cup\{v\}}^\times$ at $v$ and $l_{v,i}, l'_{v,i}\in\Z/e\Z$ such that $\pi_v^{-1}=b_i^{l_{v,i}}$ in $\F_{v_i}\otimes\Z/e\Z$ and $\pi_v^{-1}=(b_i')^{l_{v,i}'}$ in $\F_{w_i}\otimes\Z/e\Z$. Then the coordinates of $\rho_S'(\pi_v)\in G'$ with respect to the standard basis are given by $l_v = (l_{v,1},l_{v,1}',\ldots,l_{v,\kprimenew},l_{v,\kprimenew}')^T$. Hence,  the Artin symbol is given by $(L_\rho/K, v) = \rho(\pi_v)= (g_1,\ldots,g_{\knew})\cdot R\cdot l_v$. If $v\in\{v_1,w_1,\ldots,v_{\kprimenew},w_{\kprimenew}\}$ is unramified, we can compute $(L_\rho/K, v)$ in the same way as above, except that we take $l_{v,i}=0$ if $v=v_i$ and $l_{v,i}'=0$ if $v=w_i$.

Given the conductor $\mathfrak f$ and a method to evaluate the Artin symbol $(L_\rho/K,v)$, we have enough information to specify the congruence subgroup of the ray class group modulo $\mathfrak f$ corresponding to $L_\rho$ through an exhaustive search for generating splitting primes. From this data, one can compute polynomials defining $L_\rho$ using algorithms of computational class field theory, see, e.g. \cite{Cohen1,CohenBook,Fieker}. Such algorithms are implemented, for example, in the computer algebra system Magma.

\subsection{Subextensions} If $\Pi:G\to H$ is an epimorphism, then $\Pi\circ\rho$ describes a subextension $L_\Pi$ of $L_\rho$ together with an isomorphism $\Gal(L_\Pi/K)\to H$. The epimorphism $\Pi\circ\rho$ is induced by the characteristic morphism $\Pi\circ\rho_S^T:\widehat{\OO}_S^\times\to H$, and, in a similar way as described in \S \ref{sec:prime_data}, we can compute the ramified primes, splitting primes and equations for $H$.

Given a presentation $G\simeq\Z/n_1\Z\times\cdots\times\Z/n_{r}\Z$, we can
apply this in particular to the projections $\Pi_i:G\to H_i=\Z/n_i\Z$. Then
$L_\rho$ is the compositum of the fields $L_{\Pi_i}$, which we can exploit, for
example, to get equations for $L_\rho$ with lower computational effort.

\subsection{Elements with prescribed norms}
Having determined concrete polynomials defining the number field $L=L_\rho$,
one may want to find elements $\beta_1,\ldots,\beta_t\in L$ such that
$\N_{L/K}(\beta_i)=\alpha_i$ for $1\leq i\leq t$. By our construction, such
$\beta_i$ will exist, and one could find them through an exhaustive search. It
is much faster to use more sophisticated algorithms, see,
e.g. \cite{Simon}, \cite[\S 7.5]{CohenBook}, which solve norm equations
via $S$-units and are implemented in Magma.

\section{Illustrations}\label{sec:illustrations}

\subsection{A biquadratic extension of $\Q$ with norm $37/16$} In this first example, we take $G=(\Z/2\Z)^2$ and find an epimorphism $\rho:I_\Q/\Q^\times\to G$ with $\alpha=37/16\in\norm_{L_\rho/\Q}(L_\rho^\times)$.  

Due to the explicit nature of biquadratic fields, arguments simpler than ours would suffice here. A construction yielding distinct primes $p,q$ such that a given rational number $c$ is the norm of an element of $\Q(\sqrt{p},\sqrt{q})$ was communicated to D.~Loughran and the first-named author by J.-L.~Colliot-Th\'el\`ene, in response to a question raised at the workshop \og Rational Points 2017\fg{} in Schloss Schney (Germany). Similarly to our approach here, the primes $p,q$ are chosen in such a way as to guarantee that the extension $\Q(\sqrt{p},\sqrt{q})/\Q$ satisfies the Hasse norm principle and $c$ is a local norm at every place.
\smallskip

Since this example should serve as a first illustration of our approach, we will nevertheless use our setup relying on characteristic morphisms. The set $S=\{\infty,2,37\}\subset\Omega_\Q$ satisfies \eqref{eq:S}, and $\OO_S^\times=\Z[1/2,1/37]^\times=\langle -1,2,37\rangle$.
We can take $e=\knew=2$ and $\Phinew:G\to G$ the identity. With $\kprimenew={2\choose 2}=1$, we are aiming at a set $T$ consisting of a pair $v,w$ of places. The places in $T(S;e;1;\{-1,2,37\})$ are those primes $p$ that satisfy
\begin{equation*}
  \left(\frac{-1}{p}\right)=\left(\frac{2}{p}\right)=\left(\frac{37}{p}\right)=1,
\end{equation*}
with $\left(\frac{\cdot}{\cdot}\right)$ the Legendre symbol. We take $v=41$, the smallest such prime. In addition to the above, the place $w$ then has to satisfy
\begin{equation*}
  \left(\frac{41}{p}\right)=-1,
\end{equation*}
and the smallest choice is $w=137$. Hence, we have found our auxiliary morphism
\begin{equation*}
  \rho_S':\widehat{\Z}_S^\times\to(\F_{41}^\times\times\F_{137}^\times)\otimes \Z/2\Z = G' \simeq(\Z/2\Z)^2=G.
\end{equation*}
By our construction, we get $\rho_S'(\Q_{41}^\times)=G'$, so we can take $\rho_S=\Phiprimenew\circ\rho_S'$ for, e.g., the isomorphism $\Phiprimenew:G'\to G$, that identifies $\F_{41}^\times\otimes\Z/2\Z$ with the first $\Z/2\Z$-factor of $G$ and $\F_{137}^\times\otimes\Z/2\Z$ with the second. We have thus achieved that ${\bigwedge}^2\rho(\Q_{41}^\times)={\bigwedge}^2G$, which confirms the validity of the Hasse norm principle for $\rho$. All elements of $\Z[1/2,1/37]^\times$, including in particular our $\alpha=37/16$, are local norms at all places, and therefore in $\norm_{L_\rho/\Q}(L_\rho^\times)$.
\smallskip

Which biquadratic extension does $\rho$ describe? Via the projections $\Pi_1, \Pi_2: G\to\Z/2\Z$ to the two $\Z/2\Z$-factors of $G$, we can describe $L_\rho$ as the compositum of two quadratic subfields with conductors  $41$ and $137$. Hence, $\rho$ defines the number field $L_\rho=\Q(\sqrt{41},\sqrt{137})$.

We know now that $37/16$ is a norm from this field. In other words, using the norm form coming from the basis $1,\sqrt{41},\sqrt{137},\sqrt{5617}$ of $L_\rho$, the equation
\begin{align*}
  x_1^4 - 82x_1^2x_2^2 + 1681x_2^4 - 274x_1^2x_3^2 - 11234x_2^2x_3^2 + 18769x_3^4 + 44936 x_1x_2x_3x_4&\\ - 11234x_1^2x_4^2 - 460594 x_2^2x_4^2 - 1539058 x_3^2x_4^2 + 31550689 x_4^4 &=37/16
\end{align*}
has rational solutions $(x_1,x_2,x_3,x_4)\in\Q^4$. Indeed, Magma's norm form equation functionality instantly finds the solution
\begin{equation*}
  \left(4449545, -\frac{1389743}{2}, \frac{760267}{2}, -\frac{118739}{2}\right).
\end{equation*}

\subsection{A $\Z/6\Z \times (\Z/3\Z)^3$-extension of $\Q(\sqrt{-47})$ from which $2+3\sqrt{-47}$ is a norm.}
Here we consider an example that is sufficiently generic to demonstrate all the features of our construction. Our computations are assisted by the computer algebra system Sage\footnote{\url{https://www.sagemath.org/}}. Towards the end, we will compute concrete polynomials giving our extension $L$. For this, we use the Magma Calculator\footnote{\url{http://magma.maths.usyd.edu.au/calc/}}. None of the computations took longer than a few seconds.

Take the number field $K=\Q(\sqrt{-47})$ with class number $5$. We construct an
extension $L/K$ with Galois group $G=\Z/6\Z\times(\Z/3\Z)^3$, such that
$2+3\sqrt{-47}\in\norm_{L/K}(L^\times)$.

The principal ideal $(2+3\sqrt{-47})$ factors as the product of two prime ideals of $\OO_K$, namely
\begin{equation*}
  (2+3\sqrt{-47})=\left(7, \frac{3+\sqrt{-47}}{2}\right)\left(61, \frac{21+\sqrt{-47}}{2}\right).
\end{equation*}
We take $S$ to consist of the archimedean place of $K$ and these two prime ideals. Then $\OO_S$ has class number one, so \eqref{eq:S} is satisfied. A quick computation reveals the following set of generators $\{\gamma_0,\gamma_1,\gamma_2\}$ for the $S$-units $\OO_S^\times$: $\gamma_0=-1$, $\gamma_1=2+3\sqrt{-47}$ and $\gamma_2=128+3\sqrt{-47}$.

Take $\Phinew:(\Z/6\Z)^4\to G$ to be the epimorphism mapping each factor $\Z/6\Z$ to the corresponding factor $\Z/6\Z$ or $\Z/3\Z$ of $G$ in the obvious way. With $\knew=4$, we are looking for $\kprimenew={4\choose 2}=6$ pairs of places $v_1,w_1,\ldots,v_6,w_6$ of $K$ to define our auxiliary morphism $\rho_S'$ from \eqref{eq:rho_prime}.

Our places need to be chosen from $T(S;6;1;\{\gamma_0,\gamma_1,\gamma_2\})$, so they must be in $\places\smallsetminus S$ and satisfy the conditions that
\begin{equation}\label{eq:example_cond_v}
  -1,\ 2+3\sqrt{-47}\ \text{ and }\ 128+2\sqrt{-47}\ \text{ are $6$-th power residues in $\F_v^\times$}.
\end{equation}
Here is a list of the (prime ideals corresponding to) the first few places satisfying these conditions, in order of the rational primes lying below:
\begin{align*}
 &(97, (27+\sqrt{-47})/2),\ 
 (569),\ 
 (809),\ 
 (1033),\ 
 (1381, (1445+\sqrt{-47})/2),\\
 &(1913),\ 
 (2281, (619+\sqrt{-47})/2),\ 
 (2377, (3677+\sqrt{-47})/2),\\
 &(2887),\ 
 (4621),\ 
 (4789, (2537+\sqrt{-47})/2),\ 
   (5227),\ 
   (6101).
\end{align*}
Hence, we take $v_1$ as the first of these places, choose $b_1$ to be a primitive root in $\F_{v_1}^\times$ and compute a uniformiser $\pi_1\in\OO_{S\cup \{v_1\}}^\times$ at $v_1$ as in \S \ref{sec:finding_vi}. The precise values are recorded in Table \ref{tab:v}.

\begin{table}[H]\centering
\begin{tabular}{|c|c|c|c|}
  \hline
  $i$ & $v_i$                         & $b_i$           & $\pi_i$               \\
  \hline
  1   & $(97, (27+\sqrt{-47})/2)$     & $5$             & $-353 + 48\sqrt{-47}$ \\
  2   & $(809)$                       & $1+\sqrt{-47}$  & $809$\\
  3   & $(1033)$                      & $20+\sqrt{-47}$ & $1033$\\
  4   & $(1913)$                      & $7+\sqrt{-47}$  & $1913$\\
  5   & $(2377, (3677+\sqrt{-47})/2)$ & $5$             & $712+81\sqrt{-47}$\\
  6   & $(2887)$                      & $1+\sqrt{-47}$  & $2887$\\
  \hline
\end{tabular}
\caption{The places $v_i$ with primitive roots $b_i$ and uniformisers $\pi_i$.}
\label{tab:v}
\end{table}

\begin{table}[H]\centering
\begin{tabular}{|c|c|c|c|}
  \hline
  $i$ &  $u_i$                 & $w_i$                           & $b_i'$          \\
  \hline
  1   &  $1$                  & $(569)$                          & $2+\sqrt{-47}$ \\
  2   &  $1$                  & $(1381, (1445 + \sqrt{-47})/2)$  & $2$ \\
  3   &  $\pi_2^{-3}$          & $(2281, (619 + \sqrt{-47})/2)$   & $7$ \\
  4   &  $\pi_2^{-5}\pi_3^{-4}$ & $(4789, (2537 + \sqrt{-47})/2)$ & $2$ \\ 
  5   &  $\pi_1^{-4}\pi_3^{-2}$ & $(4621)$                      & $5+\sqrt{-47}$\\
  6   & $\pi_1^{-4}\pi_2^{-5}\pi_3^{-5}\pi_4^{-2}\pi_5^{-2}$ & $(65 + 12\sqrt{-47})$ & $7$\\
  \hline
\end{tabular}
\caption{The places $w_i$ with factors $u_i$ and primitive roots $b_i$.}
\label{tab:w}
\end{table}

As $u_1=1$, the place $w_1$ has to satisfy in addition to \eqref{eq:example_cond_v} the condition that $X^6-\pi_1$ is irreducible over $\F_{w_1}^\times$, so $\pi_1$ may not be a quadratic or cubic residue in $\F_{w_1}^\times$. We choose $w_1$ as the first place satisfying these conditions and $b_1'$ as a primitive root for $\F_{w_1}^\times$. See Table \ref{tab:w} for the values.

Next, we pick $v_2$ as the next available place from our list and choose a corresponding primitive root $b_2$ and uniformiser $\pi_2\in\OO_{S\cup\{v_2\}}$. As $v_2$ is generated by an inert rational prime, the choice of uniformiser is simple.

Before choosing $w_2$, we need to find $u_2$. To this end, we compute the discrete logarithms $l_{1,1}'=5$ and $l_{1,2}'=0$ (modulo $6$) of $\pi_1^{-1}$ and $\pi_2^{-1}$ in $\F_{w_1}^\times$, with respect to the chosen primitive root $b_1'$. Hence, the images of $\pi_1$ and $\pi_2$ in $\F_{w_1}^\times\otimes\Z/6\Z$ have coordinates $5$ and $0$ with respect to the basis $b_1'\otimes 1$, and thus $\pi_2$ has the same image as $\pi_1^0$. We take $u_2=(\pi_1^0)^{-1}=1$. The condition that $w_2$ needs to satisfy, in addition to \eqref{eq:example_cond_v}, is that $X^6-\pi_2$ is irreducible over $\F_{w_2}$. We take $w_2$ as the first place that satisfies this condition and compute a primitive root $b_2'$ of $\F_{w_2}^\times$. 

We choose $v_3$ as the next available place in our list, together with a primitive root $b_3$ and uniformiser $\pi_3$.

For $u_3$, the $\Z/6\Z$-linear system \eqref{eq:ui_equations} takes the form
\begin{equation*}
  \begin{pmatrix}
   5 &0 \\
   4 &5 
  \end{pmatrix}\cdot
  \begin{pmatrix}
    c_1\\c_2
  \end{pmatrix}=
  \begin{pmatrix}
    0 \\ 3
  \end{pmatrix},
\end{equation*}
with solution $(c_1,c_2)=(0,3)$. Hence, we take $u_3=\pi_2^{-3}$, and choose $w_3$ as the smallest place that satisfies \eqref{eq:example_cond_v} and moreover that $X^6-u_3\pi_3$ is irreducible over $\F_{w_3}$.

We continue in the same fashion to compute the remaining places $v_4,w_4,v_5,w_5,v_6,w_6$ as well as the corresponding primitive roots and uniformisers (for the $v_i$). The data is collected in Tables \ref{tab:v} and \ref{tab:w}.

This gives us everything that we need to define the auxiliary morphism
\begin{equation*}
  \rho_S' : \widehat{\OO}_S^\times\to G'= (\F_{v_1}^\times\times\F_{w_1}^\times\times\cdots\times\F_{v_6}^\times\times\F_{w_6}^\times)\otimes\Z/6\Z
\end{equation*}
and the standard basis $b_1\otimes 1, b_1'\otimes 1,\ldots,b_6\otimes 1,b_6'\otimes 1$. The change of basis from the standard basis to the $e_i,e_i'$ satisfying \eqref{eq:basis_condition} is described by the matrix \eqref{eq:change_of_basis}, which takes in our case the form

\setcounter{MaxMatrixCols}{20}
\begin{equation*}
  A=\begin{pmatrix}
1 & 0 & 0 & 0 & 0 & 3 & 0 & 0 & 0 & 3 & 0 & 1 \\
0 & 5 & 0 & 0 & 0 & 0 & 0 & 0 & 0 & 2 & 0 & 0 \\
0 & 4 & 1 & 0 & 0 & 0 & 0 & 0 & 0 & 0 & 0 & 0 \\
0 & 4 & 0 & 5 & 0 & 3 & 0 & 1 & 0 & 4 & 0 & 0 \\
0 & 1 & 0 & 2 & 1 & 0 & 0 & 0 & 0 & 4 & 0 & 2 \\
0 & 0 & 0 & 2 & 0 & 1 & 0 & 2 & 0 & 2 & 0 & 5 \\
0 & 0 & 0 & 0 & 0 & 0 & 1 & 0 & 0 & 4 & 0 & 0 \\
0 & 1 & 0 & 0 & 0 & 1 & 0 & 5 & 0 & 0 & 0 & 1 \\
0 & 1 & 0 & 4 & 0 & 2 & 0 & 0 & 1 & 0 & 0 & 0 \\
0 & 2 & 0 & 2 & 0 & 2 & 0 & 4 & 0 & 5 & 0 & 4 \\
0 & 3 & 0 & 4 & 0 & 4 & 0 & 0 & 0 & 4 & 1 & 0 \\
0 & 4 & 0 & 1 & 0 & 5 & 0 & 5 & 0 & 0 & 0 & 3
  \end{pmatrix}.
\end{equation*}
An epimorphism $\Psi:G'\to(\Z/6\Z)^4$ as in \eqref{eq:def_Psi} is given, with respect to the basis $e_i,e_i'$ of $G'$ and the standard basis $f_1,f_2,f_3,f_3$ of $(\Z/6\Z)^4$ by the matrix
\begin{equation*}
B=\begin{pmatrix}
  1 & 0 & 1 & 0 & 1 & 0 & 0 & 0 & 0 & 0 & 0 & 0 \\
0 & 1 & 0 & 0 & 0 & 0 & 1 & 0 & 1 & 0 & 0 & 0 \\
0 & 0 & 0 & 1 & 0 & 0 & 0 & 1 & 0 & 0 & 1 & 0 \\
0 & 0 & 0 & 0 & 0 & 1 & 0 & 0 & 0 & 1 & 0 & 1
\end{pmatrix}.
\end{equation*}
The epimorphism $\Phinew:(\Z/6\Z)^4\to G$ just reduces the last three coordinates modulo $3$, so it is represented with respect to the $f_i$ and the standard  generators $(1,0,0,0)$, $(0,1,0,0)$, $(0,0,1,0)$, $(0,0,0,1)$ of $G$ by the identity matrix. In total, our map $\Phiprimenew=\Phinew\circ\Psi:G'\to G$ is given, with respect to the standard basis of $G'$ and the standard generators of $G$, by the matrix
\begin{equation*}
  R=BA^{-1}=\begin{pmatrix}
1 & 5 & 1 & 4 & 1 & 5 & 0 & 4 & 0 & 1 & 0 & 2 \\
0 & 2 & 0 & 2 & 0 & 2 & 1 & 0 & 1 & 2 & 0 & 2 \\
0 & 1 & 0 & 4 & 0 & 3 & 0 & 2 & 0 & 4 & 1 & 5 \\
0 & 4 & 0 & 4 & 0 & 1 & 0 & 4 & 0 & 1 & 0 & 0
            \end{pmatrix}.
          \end{equation*}
          We have now fully specified the characteristic morphism $\rho_S=\Phiprimenew\circ\rho_S':\widehat{\OO}_S^\times\to G$ that induces our epimorphism $\rho:I_K^\times/K^\times\to G$ giving the extension $L_\rho=L$.

          Let us find concrete polynomials defining $L$. To this end, we note that $L$ is the compositum of four subextensions $L_1,L_2,L_3,L_4$ corresponding to the projections $\Pi_j$ from $G$ to its cyclic factors $H_j=\Z/6\Z$ (for $j=1$) and $H_j=\Z/3\Z$ (for $j=2,3,4$).

          The extension $L_j$ is thus induced by the characteristic morphism $\rho_{S,j} = \Phiprimenew_j\circ\rho_S$, where the map $\Phiprimenew_j=\Pi_j\circ\Phiprimenew$ is represented by the $j$-th row $R_j$ of the matrix $R$. We can read off the places of $K$ ramified in $L_j$ as those $v_i,w_i$ whose entry in the $j$-th row of $R$ is not equal to $0$ (modulo $3$ for $j=2,3,4$). Hence, the conductor ideal of $L_j$ is $\mathfrak{f}_j$, with
          \begin{align*}
            \mathfrak{f}_1 &= v_1w_1v_2w_2v_3w_3w_4w_5w_6, \\
            \mathfrak{f}_2 &= w_1w_2w_3v_4v_5w_5w_6 ,      \\
            \mathfrak{f}_3 &= w_1w_2w_4w_5v_6w_6,   \\
            \mathfrak{f}_4 &= w_1w_2w_3w_4w_5.
          \end{align*}
          All places in $S$ split completely in all of the $L_j$. For a place $v\notin S$, we compute a uniformiser $\pi_v\in\OO_{S\cup\{v\}}^\times$ as in \S \ref{sec:finding_vi} and the vector $l_v=(l_{1,v},l_{1,v}',\ldots,l_{6,v},l_{6,v}')$, with $l_{i,v}$ (or $l_{i,v}'$) the discrete logarithm of $\pi_v^{-1}$ in $\F_{v_i}^\times$ (or $\F_{w_i}^\times$) with respect to $b_i$ (or $b_i'$), taken modulo $6$. Then $l_v$ consists of the coordinates of $\rho_S'(\pi_v)$ in the standard basis of $G'$, so $\rho_{S,j}(\pi_v)=0$ if and only if $R_jl_v=0$ (modulo $3$ for $j=2,3,4$). These $v$ are exactly the places not in $S$ that split completely in $L_j$.

In each of the extensions $L_j$, after an exhaustive search for places satisfying these conditions, we present the first few places of $K$ that split completely: for $L_1$, these are the prime ideals
\begin{align*}
  &(7, (3+\sqrt{-47})/2),
 (7,  (11+\sqrt{-47})/2),
 (53, (71+\sqrt{-47})/2),
 (59, (37+\sqrt{-47})/2),\\
 &(61, (21+\sqrt{-47})/2),
 (67),
 (97, (167+\sqrt{-47})/2),
 (103, (57+\sqrt{-47})/2),\\
 &(131, (79+\sqrt{-47})/2),
 (149, (129+\sqrt{-47})/2),
 (157, (253+\sqrt{-47})/2).            
\end{align*}
For $L_2$, we get the prime ideals
\begin{align*}
  &(2, (1+\sqrt{-47})/2),
 (5),
 (7, (3+\sqrt{-47})/2),
 (11),
 (41),
 (43),\\
 &(53, (71+\sqrt{-47})/2),
 (59, (37+\sqrt{-47})/2),
 (61, (21+\sqrt{-47})/2),\\
 &(61, (101+\sqrt{-47})/2),
 (67),
 (73),
 (79, (43+\sqrt{-47})/2).
\end{align*}
For $L_3$, the first few splitting places are
\begin{align*}
 &(2, (-1+\sqrt{-47})/2),
 (2, (1+\sqrt{-47})/2),
 (3, (1+\sqrt{-47})/2),\\
 &(7, (3+\sqrt{-47})/2),
 (17, (19+\sqrt{-47})/2),
 (29),
 (43),
 (53, (35+\sqrt{-47})/2),\\
 &(61, (21+\sqrt{-47})/2),
 (71, (109+\sqrt{-47})/2),
 (73),
 (79, (43+\sqrt{-47})/2).
\end{align*}
Finally, the first few splitting primes for $L_4$ are
\begin{align*}
 &(2, (-1+\sqrt{-47})/2),
 (2, (1+\sqrt{-47})/2),
 (5),
 (7, (3+\sqrt{-47})/2),\\
 &(17, (19+\sqrt{-47})/2),
 (19),
 (23),
 (29),
 (37, (45+\sqrt{-47})/2).
\end{align*}
In each case, the presented list of splitting primes is barely enough to generate the correct congruence subgroup of the ray class group modulo $\mathfrak{f}_j$. From this information, Magma obtains the following polynomials defining the $L_j$:

The field $L_1$ is generated by roots of the polynomials
\begin{align*}
  &X^2 + 1625554186831677234132\sqrt{-47} + 1811860086730297979035
\end{align*}
and
\begin{align*}
  X^3 - &3748167037906625162481707496423982188817923353276910868587 X\\
  + & 20932667006810986711572641003097069272613523\\
    &9089132839261455735058433435300040785407916\\
  - &4892889242489320858461993956906597337627420\\
    &962014982904896214316984337417374211250173\sqrt{-47}.
\end{align*}
The field $L_2$ is generated by a root of
\begin{align*}
  X^3 - &514252500054134587252291124378250069242533908387 X\\
     +\frac{1}{2} \Big(- &298646938917972463681140245645712648\\
        &301029855802019621285424376904094635\\
  + &1662861756262325040866728524283288871452\\
    &494124982673546015261665625225\sqrt{-47} \Big).
\end{align*}
For $L_3$, we get the generating polynomial
\begin{align*}
  X^3 - &913736091749824689643505040786706198418283 X\\
      - & 233073217500727285419128281222253866235433000542376262114304064\\
      + & 22955375656018471299613470304327792587793595929519410170152870\sqrt{-47},
\end{align*}
and for $L_4$ the polynomial
\begin{align*}
  X^3 - &4720036166349902544210196434745323 X\\
  - &142867906955019411285258390809644752015601994134080\\
  - &6327448763258896081623032545540797957509248167898\sqrt{-47}.
\end{align*}
Hence, the compositum $L$ is generated over $K$ by roots of all of these polynomials.

\begin{bibdiv}
  \begin{biblist}

\bib{Cohen1}{incollection}{
    AUTHOR = {Cohen, Henri},
     TITLE = {A survey of computational class field theory},
      NOTE = {Les XX\`emes Journ\'{e}es Arithm\'{e}tiques (Limoges, 1997)},
   JOURNAL = {J. Th\'{e}or. Nombres Bordeaux},
  FJOURNAL = {Journal de Th\'{e}orie des Nombres de Bordeaux},
    VOLUME = {11},
      YEAR = {1999},
    NUMBER = {1},
     PAGES = {1--13},
   }
\bib{CohenBook}{book}{
    AUTHOR = {Cohen, Henri},
     TITLE = {Advanced topics in computational number theory},
    SERIES = {Graduate Texts in Mathematics},
    VOLUME = {193},
 PUBLISHER = {Springer-Verlag, New York},
      YEAR = {2000},
     PAGES = {xvi+578},
}
\bib{Fieker}{article}{
    AUTHOR = {Fieker, Claus},
     TITLE = {Computing class fields via the {A}rtin map},
   JOURNAL = {Math. Comp.},
  FJOURNAL = {Mathematics of Computation},
    VOLUME = {70},
      YEAR = {2001},
    NUMBER = {235},
     PAGES = {1293--1303},
}
\bib{HasseNormPrinciple}{article}{
   author={Frei, Christopher},
   author={Loughran, Daniel},
   author={Newton, Rachel},
   title={The Hasse norm principle for abelian extensions},
   journal={Amer.~J.~Math.},
   VOLUME = {140},
   YEAR = {2018},
   NUMBER = {6},
   PAGES = {1639--1685},
}
\bib{Frei}{misc}{
   author={Frei, Christopher},
   author={Loughran, Daniel},
   author={Newton, Rachel},
   title={Number Fields with prescribed norms},
   date={arXiv:1810.06024, 2018},
}
 \bib{HW18}{article}{
   author = {Yonatan Harpaz and Olivier Wittenberg},
   title = {Z\'ero-cycles sur les espaces homog\`enes et probl\`eme de Galois inverse},
   journal = {J.~Amer.~Math.~Soc.},
   Volume={33},
   Number={3},
   Pages={775-805},
   date = {2020},
 }
\bib{Hasse}{article}{
 Author = {Hasse, Helmut},
 Title = {{Beweis eines Satzes und Widerlegung einer Vermutung \"uber das allgemeine Normenrestsymbol.}},
 Journal = {{Nachr. Ges. Wiss. G\"ottingen, Math.-Phys. Kl.}},
 Volume = {1931},
 Pages = {64--69},
 Year = {1931},
}
\bib{Jehne}{article}{
   author={Jehne, Wolfram},
   title={On knots in algebraic number theory},
   note={In memoriam Arnold Scholz},
   journal={J. Reine Angew. Math.},
   volume={311/312},
   date={1979},
   pages={215--254},
}
\bib{Neukirch}{book}{
    AUTHOR = {Neukirch, J\"{u}rgen},
     TITLE = {Algebraic number theory},
    SERIES = {Grundlehren der Mathematischen Wissenschaften [Fundamental
              Principles of Mathematical Sciences]},
    VOLUME = {322},
      NOTE = {Translated from the 1992 German original and with a note by
              Norbert Schappacher,
              With a foreword by G. Harder},
 PUBLISHER = {Springer-Verlag, Berlin},
      YEAR = {1999},
     PAGES = {xviii+571},
   }
\bib{Simon}{article}{
    AUTHOR = {Simon, Denis},
     TITLE = {Solving norm equations in relative number fields using
              {$S$}-units},
   JOURNAL = {Math. Comp.},
  FJOURNAL = {Mathematics of Computation},
    VOLUME = {71},
      YEAR = {2002},
    NUMBER = {239},
     PAGES = {1287--1305},
      ISSN = {0025-5718},
}
\bib{Tate}{inproceedings}{
    AUTHOR = {Tate, John T.},
     TITLE = {Global class field theory},
 BOOKTITLE = {Algebraic {N}umber {T}heory ({P}roc. {I}nstructional {C}onf.,
              {B}righton, 1965)},
     PAGES = {162--203},
 PUBLISHER = {Thompson, Washington, D.C.},
      YEAR = {1967},
}
\bib{Thorner}{article}{
  Author = {Thorner, Jesse},
  Author = {Zaman, Asif},
    Title = {{An explicit bound for the least prime ideal in the Chebotarev density theorem.}},
    FJournal = {{Algebra \& Number Theory}},
    Journal = {{Algebra Number Theory}},
    ISSN = {1937-0652; 1944-7833/e},
    Volume = {11},
    Number = {5},
    Pages = {1135--1197},
    Year = {2017},
}

\end{biblist}
\end{bibdiv}
\end{document}